\def\TS{T}
\def\tV{\widetilde{V}}
\newcommand{\fb}{\ensuremath{\mathfrak{b}}}
\newcommand{\fq}{\ensuremath{\mathfrak{q}_\sharp}}
\title[Character sums and $L$-functions modulo prime powers]
      {Bounds on short character sums and $L$-functions\break
       for characters with a smooth modulus}
\author[W.\ D.\ Banks]{William D.\ Banks}
\address{Department of Mathematics, 
         University of Missouri, 
         Columbia MO, USA.}
\email{bankswd@missouri.edu}
\author[I.\ E.\ Shparlinski]{Igor E.\ Shparlinski}
\address{Department of Pure Mathematics,
		 University of New South Wales,
		 Sydney, NSW 2052, Australia.}
\email{igor.shparlinski@unsw.edu.au}
\date{\today}
\begin{document}

\begin{abstract}
We combine a classical idea
of Postnikov (1956) with the method of Korobov (1974) 
for estimating double Weyl sums, deriving new bounds on
short character sums when the modulus $q$ has a small core
$\prod_{p\mid q}p$. Using this estimate,  we improve
certain bounds of Gallagher (1972) and Iwaniec (1974) for
the corresponding $L$-functions. In turn, this allows us to
improve the error term in the asymptotic formula for primes 
in short arithmetic progressions modulo a  power of a fixed prime.
As yet another application of our bounds, we substantially extend the region 
free of \emph{Siegel zeros}.
\end{abstract}

\maketitle


\begin{quote}
\textbf{MSC Numbers:} 11L40; 11L26, 11M06, 11M20.
\end{quote}

\begin{quote}
\textbf{Keywords:} Character sums, exponential sums, short interval,
smooth numbers, Dirichlet $L$-function.
\end{quote}

\section{Introduction}

\subsection{Background}
The core (or kernel) of a positive integer $q$ is the product $\fq$
over the prime divisors $p$ of $q$, that is,
$$
\fq=\prod_{p\mid q}p.
$$
Given a modulus $q$ with a small core $\fq$, 
a nonprincipal character $\chi$ modulo $q$,
and integers $M$ and $N\ge 1$, we study
the character sum $S_\chi(M,N)$ defined by
$$
S_\chi(M,N) = \sum_{n=M+1}^{M+N}\chi(n).
$$
In the case of a prime power modulus $q=p^\gamma$,
where $\fq=p$ is prime and $\gamma$ is a large integer,
it has been known since the work of  Postnikov~\cite{Post1,Post2}
that these sums satisfy bounds that are superior to those
which can be established for arbitrary moduli
(in full generality, the Burgess bound
still gives the strongest known results;
see, e.g.,  Iwaniec and Kowalski~\cite[Theorem~12.6]{IwKow}).
Further advances and modifications have been achieved by 
Gallagher~\cite{Gal} along with
applications to $L$-functions and to the distribution of primes in 
progressions modulo $p^\gamma$.
Iwaniec~\cite{Iwan} has extended those results to moduli $q$ with
a small core $\fq$. 
Both Gallagher~\cite{Gal} and Iwaniec~\cite{Iwan} 
also give estimates for Dirichlet $L$-functions  $L(s,\chi)$
(where $s=\sigma+it\in\CC$ with $\sigma=\Re s$ and $t=\Im s$) when
$\sigma$ is close to one and $\chi$
is a primitive character modulo $q$;
their estimates are uniform in the parameters $q$ and $t$,
where $q = p^\gamma$ (in~\cite{Gal}) 
or $q$ has a small core (in~\cite{Iwan}).
Further results in this direction have
been obtained by Chang~\cite{Chang}. 

\subsection{Outline of results}

Here we combine the method of Postnikov\cite{Post1,Post2}
with a different approach to estimating exponential sums with polynomials
which is due to Korobov~\cite{Kor}.  This allows us to improve
known bounds on character sums and Dirichlet polynomials,
which in turn leads to new bounds on Dirichlet 
$L$-functions and their
zero-free regions. In particular, we improve some of the 
main results of Gallagher~\cite{Gal} and Iwaniec~\cite{Iwan}
and  substantially extend the region 
free of \emph{Siegel zeros}.
See Sections~\ref{sec:BoundChar} and~\ref{sec:Appl} below
for a precise description of our results and techniques.

Furthermore, as an application of our results on Dirichlet  $L$-functions, in Section~\ref{sec:psi int} 
we give a new asymptotic formula
for the number of primes in arithmetic progressions relative
to a large prime power modulus, including
the case in which primes are taken from a short interval.
We do not improve the Linnik exponent on the least
prime in an arithmetic progression of this type 
(see~\cite{Chang,Iwan} for the latest results in this direction)
since we are unable to exploit
the specific form of the bound~\eqref{eq:varthetashape} below
to strengthen existing zero density estimates.
Nevertheless, it is likely that Theorem~\ref{thm:zerofreeregion} will find many other interesting applications. 

\section{Bounds of character sums}
\label{sec:BoundChar}

\subsection{New bounds on short character sums}
For a given prime $p$, let $v_p$ be the standard
$p$-adic valuation; in other words, if $n\ne 0$ and $v_p(n)=\nu$,
then $\nu$ is the largest integer for which $p^\nu\mid n$.
In this paper, we show that there are absolute, effectively computable constants
$\gamma_0,\xi_0>0$ with the following property. 
For any modulus $q$ satisfying
\begin{equation}
\label{eq:minmax}
\min\limits_{p\mid q}\{v_p(q)\}
\ge 0.7\gamma\qquad
\text{with}\quad\gamma=\max\limits_{p\mid q}\{v_p(q)\}\ge\gamma_0,
\end{equation}
the bound
\begin{equation}
\label{eq:mainbound}
S_\chi(M,N) \le A  N^{1-\xi_0/\varrho^2}
\qquad(M,N\in\ZZ,~N\ge \fq^{\gamma_0})
\end{equation}
holds, where $\varrho$ is determined via the
relation $N^\varrho=q$, and  $A$ is an absolute and effective constant.

In earlier versions of this
result, the bounds have been of the somewhat weaker form
\begin{equation}
\label{eq:Iwaniec}
S_\chi(M,N)\le \exp\(a \varrho  (1+ \log\varrho)^2\) N^{1-\xi_0/(\varrho^2\log\varrho)}
\end{equation}
with an absolute constant $a$ (see, e.g.,~\cite[Theorem~12.16]{IwKow}).
One advantage of~\eqref{eq:mainbound} over~\eqref{eq:Iwaniec} is the absence of
$\log\varrho$ in the denominator of the ``savings'' term
in the exponent of $N$.  A more crucial advantage, however, is that our  
bound~\eqref{eq:mainbound} has an {\it absolute constant\/} $A$ instead 
of the superexponential function  of $\varrho$ that appears in~\eqref{eq:Iwaniec};
this ultimately accounts for our improvement of the exponent $3/4$
in~\eqref{eq:Iw cutoff} down to  $2/3$ in~\eqref{eq:BS cutoff} below.  

We note that the recent work of Chang~\cite{Chang} extends the class of moduli $q$ 
to which the method of Postnikov~\cite{Post1,Post2} applies, but provides weaker 
bounds than ours.

Mili{\'c}evi{\'c}~\cite{Mil} also uses the method 
of Postnikov~\cite{Post1,Post2}. However, the main goal of~\cite{Mil} is to
estimate $L$-functions  $L(s,\chi)$ 
in the different extreme case in which $s=1/2$, as opposed to the case $s=1$
(or more generally, $\sigma$ close to one) which is the case considered here.
It turns out that for applications to $L(1/2,\chi)$ the strength of the bound
of the character sums is more important than its range. Thus, Mili{\'c}evi{\'c}~\cite{Mil} works in a different regime of long character sums,
whereas we are mainly interested in short sums that are decisive
for estimating $L(s,\chi)$ when $\sigma$ is close to one.

To give a brief comparison of the strengths  of our  bound~\eqref{eq:mainbound},
which stems from our approach via double sums, and
of~\eqref{eq:Iwaniec}, which is based on standard Weyl sums,
we note that~\eqref{eq:mainbound} is nontrivial for
\begin{equation}
\label{eq:BS cutoff}
N \ge \exp\((\log q)^{2/3+\eps}\)
\end{equation}
whereas~\eqref{eq:Iwaniec} requires that
\begin{equation}
\label{eq:Iw cutoff}
N \ge \exp\(( \log q)^{3/4+\eps}\).
\end{equation}

Our approach to~\eqref{eq:mainbound} relies on an idea of
Korobov~\cite{Kor} coupled with the use of Vinogradov's mean value theorem in
the explicit form given by Ford~\cite{Ford}.  Specifically,
we employ a precise bound on the
quantity $N_{k,d}(P)$ defined as the number of solutions to the system of equations
\begin{equation}
\label{eq:System}
y_1^r+\cdots y_k^r=z_1^r+\cdots+z_k^r\qquad
(1\le r\le d,~1\le y_r,z_r\le P).
\end{equation}
It is worth remarking that later improvements of Vinogradov's mean value theorem
due to Wooley~\cite{Wool1,Wool2,Wool3}, and more recently, to
Bourgain, Demeter and Guth~\cite{BDG} (the latter providing a bound that is
essentially optimal with respect to  $P$), are not suitable
for our purposes as they contain implicit constants
that depend on $k$ and $d$, whereas our methods require that $k$ and $d$
be permitted to grow with $P$.

Bearing in mind potential applications to $L$-functions
(some of which are given below) we 
establish the following generalization
of the bound~\eqref{eq:mainbound}. 
For a given polynomial $G(x)$ with real coefficients, let
$$
S_\chi(M,N;G) = \sum_{n=M+1}^{M+N}\chi(n)\e(G(n)),
$$
where $\e(t)=e^{2\pi it}$ for all $t\in\RR$.

For given 
functions $U$ and $V$, the notations $U\ll V$, $V\gg U$ and
$U=O(V)$ are all equivalent to the statement that the inequality
$|U|\le c|V|$ holds with some constant $c>0$.
Throughout the paper, we indicate explicitly
the parameters on which the implied constants may depend.

\begin{theorem} 
\label{thm:mainG}
For any real number $C>0$ there are effectively computable constants
$\gamma_0,\xi_0>0$ that depend only on $C$ and have the following property.
For any modulus $q$ satisfying~\eqref{eq:minmax}
and any primitive character $\chi$ modulo $q$, the bound
\begin{equation}
\label{eq:mainboundG}
S_\chi(M,N;G) 
\ll N^{1-\xi_0/\varrho^2}
\end{equation}
holds uniformly for all $M,N\in\ZZ$ and $G\in\RR[x]$ subject to the conditions
\begin{equation}
\label{eq:condsC}
q \ge N\ge \fq^{\gamma_0}\mand \deg G\le C\varrho,
\end{equation}
where $\varrho=(\log q)/\log N$
and  implied constant in~\eqref{eq:mainboundG}
is effective and depends only on $C$.
\end{theorem}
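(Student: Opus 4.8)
The plan is to combine Postnikov's $p$-adic expansion of $\chi$ with Korobov's device for reducing a single polynomial Weyl sum to a double sum, and then estimate the resulting double sum via Hölder's inequality and Ford's explicit form of Vinogradov's mean value theorem. The starting point is Postnikov's observation: if $q$ has core $\fq$ and $\gamma = \max_p v_p(q)$, then for $n$ in a short interval one can write $\chi(1+\fq^{\kappa} m) = \e(\psi(m))$ for a polynomial $\psi$ of controlled degree whose coefficients are $\fq$-adically small (roughly, the degree-$j$ coefficient has $\fq$-adic size like $\fq^{-\kappa j}$ times a unit, up to the relevant modulus). Thus after splitting the range $M+1,\dots,M+N$ into arithmetic progressions modulo a suitable power $\fq^{\kappa}$ and pulling out $\chi$ of the common part, the character sum becomes (up to $O(\fq^\kappa)$ terms) a sum of pure exponential sums $\sum_{m} \e(\psi(m) + G(\text{affine in }m))$ over an interval of length about $N/\fq^\kappa$. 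Since $\deg G \le C\varrho$ and $\deg\psi$ is also $O(\varrho)$ (this is why condition~\eqref{eq:minmax} with the $0.7\gamma$ lower bound matters — it guarantees enough usable digits), the combined phase is a real polynomial of degree $d = O(\varrho)$, and everything is reduced to bounding $\sum_{m \le P}\e(f(m))$ with $P \approx N/\fq^\kappa$ and $\deg f = d$.

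**Next I would** apply Korobov's trick rather than a direct Weyl differencing: for a well-chosen pair of parameters, write each $m$ in the inner range as $m = u + v w$ (or a similar bilinear substitution) so that $f(u+vw)$, expanded in $u$, has coefficients that are polynomials in $v$; summing trivially over one variable and applying Hölder in the other turns $|\sum_m \e(f(m))|^{2k}$ into an average controlled by $N_{k,d}(P')$, the count of solutions of the Vinogradov system~\eqref{eq:System} at the relevant scale $P'$, times a main-term factor coming from how densely the coefficient tuples of $f(u+vw)$ are distributed. The point of using Korobov's approach over standard Weyl sums is precisely the one flagged in the introduction: it yields a saving of the shape $N^{1-\xi_0/\varrho^2}$ with no $\log\varrho$ loss, and — crucially — the factor in front is absolute rather than a superexponential function of $\varrho$, because one invokes Ford's bound on $N_{k,d}(P)$, which is explicit and uniform in $k$ and $d$ (unlike the Wooley or Bourgain–Demeter–Guth bounds, as the excerpt notes). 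I would choose $k$ and $d$ as explicit functions of $\varrho$ (something like $d \asymp \varrho$ and $k \asymp d^2$ or $d^2\log d$, tuned so Ford's error beats the main term) and optimize $\kappa$ so that $P = N/\fq^\kappa$ is still a positive power of $N$; condition $N \ge \fq^{\gamma_0}$ with $\gamma_0$ large gives the room to do this.

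**The main obstacle** will be bookkeeping the $\fq$-adic sizes of the coefficients of the Postnikov polynomial $\psi$ and of $\psi(u+vw)$ jointly with the archimedean polynomial $G$, and verifying that the combined coefficient tuple ranges over a box whose volume is small enough that, after the mean-value input, the exponent in $N$ genuinely drops by $\xi_0/\varrho^2$ uniformly in $G$ with $\deg G \le C\varrho$. In particular one must check that adding $G$ does not destroy the gain: since $G$ has real (not necessarily rational) coefficients, the relevant count is still $N_{k,d}$ but the "spacing" of the leading coefficients is governed by $\psi$, so one needs the $G$-contribution to sit inside the same Hölder/mean-value framework — this forces $\deg G$ to be at most a constant multiple of $\varrho$, which is exactly~\eqref{eq:condsC}. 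The dependence of $\gamma_0$ and $\xi_0$ on $C$ enters here: larger $C$ means a larger degree $d$, hence a larger $k$ in Ford's theorem and a smaller admissible saving, so one tracks the constants through Ford's bound to extract effective $\gamma_0(C)$, $\xi_0(C)$. Once the pure exponential sum bound $\sum_{m\le P}\e(f(m)) \ll P^{1-\xi_0'/d^2}$ is in hand, summing over the $O(\fq^\kappa)$ progressions and using $\fq^\kappa = N^{o(1)}$ (guaranteed by $N \ge \fq^{\gamma_0}$) recovers~\eqref{eq:mainboundG}; the $N \le q$ hypothesis keeps $\varrho \ge 1$ so that $\xi_0/\varrho^2$ is a genuine saving.
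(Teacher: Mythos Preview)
Your overall strategy---Postnikov reduction plus Korobov's double-sum device plus Ford's explicit Vinogradov mean value---matches the paper's, but two mechanical points in your sketch diverge from the actual argument in ways that matter.

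First, the paper does \emph{not} split $[M+1,M+N]$ into arithmetic progressions modulo $\fq^\kappa$ and then, in a second stage, introduce bilinear structure via a substitution ``$m=u+vw$''. Instead it performs a single shift-averaging step directly on the character sum:
\[
S_\chi(M,N;G)=\fq^{-2s}\sum_{y,z=1}^{\fq^s}\sum_{n\in\cN}\chi(n+\fq^s yz)\,\e\bigl(G(n+\fq^s yz)\bigr)+O(\fq^{3s}),
\]
which simultaneously (i) allows one to factor $\chi(n+\fq^s yz)=\chi(n)\chi(1+\fq^s\bar n\,yz)$ and apply Lemma~\ref{lem:iwaniec} to the second factor, and (ii) produces for each fixed $n$ a genuine double sum $\sum_{y,z=1}^{\fq^s}\e(h_n(yz))$ of exactly the shape required by Lemma~\ref{lem:korobov}. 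The ``$m=u+vw$'' device you describe is neither a partition of the range nor what Korobov's lemma treats; that lemma is specifically about sums $\sum_{y,z=1}^P\e(g(yz))$ with rational approximations $a_r/b_r$ to the coefficients of $g$, and it is the \emph{product} structure $yz$ (not an additive shift $u+vw$) that removes the $\log\varrho$ from the exponent. As written, your step~2 is closer to standard Vinogradov shifting and would revert to the weaker bound~\eqref{eq:Iwaniec}.

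Second, the handling of $G$ is more concrete than ``the spacing of the leading coefficients is governed by $\psi$''. In Lemma~\ref{lem:korobov} the bound depends on $W=\prod_r\min\{P^r,P^r\fb_r^{-1/2}+\fb_r^{1/2}\}$. Since $\deg G\le C\varrho\le\gamma/(3s)$ (choosing $\eps$ small in terms of $C$), the polynomial $G$---through $H_n(x)=G(n+\fq^s x)$---only perturbs the coefficients of $h_n$ in degrees $r\le\gamma/(3s)$. For these $r$ the paper simply sets $\fb_r=1$ and takes the trivial factor $P^r=\fq^{rs}$ in $W$; all of the saving comes from the higher-degree coefficients, which are purely those of the Postnikov polynomial $g_n$ and have explicit denominators with $v_p(b_r)=\max\{0,v_p(q)-rs+v_p(r)\}$. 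The $0.7$ in~\eqref{eq:minmax} enters precisely in estimating $W$: writing $q=\fq^{\mu\gamma}$ with $\mu\in[0.7,1]$, the exponent of $\fq$ in $W$ computes to $(\tfrac{5}{6}+\tfrac{\mu^2}{4}-\tfrac{2\mu}{3})\gamma^2/s+O(\gamma\log d/s)$, and one needs $\mu$ bounded away from zero to push this below $0.495\,sd^2$, which is what then beats Ford's bound $N_{k,d}(\fq^s)\le d^{3d^3}\fq^{s(2k-0.499d^2)}$ with $k\in[2d^2,4d^2]$.
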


As an application of Theorem~\ref{thm:mainG}, we also study Dirichlet polynomials
of the form
$$
\TS_\chi(M,N;t) = \sum_{n=M+1}^{M+N}\chi(n) n^{it}\qquad(t\in\RR).
$$
Approximating $\TS_\chi(M,N;t)$ by sums $S_\chi(M,N;G)$ with appropriately
chosen polynomials $G$, we derive the following bound.

\begin{theorem}
\label{thm:mainT}
For any real number $C>0$ there are effectively computable constants
$\gamma_0,\xi_0>0$ that depend only
on $C$ and have the following property.
For any modulus $q$ satisfying~\eqref{eq:minmax}
and any primitive character $\chi$ modulo $q$, the bound
\begin{equation}
\label{eq:mainboundT}
\TS_\chi(M,N;t) 
\ll N^{1-\xi_0/\varrho^2}
\end{equation}
holds uniformly for all $M,N\in\ZZ$ and $t\in\RR$ subject to the conditions
\begin{equation}
\label{eq:condsT}
2N\ge M\ge N,\qquad
q \ge N\ge \fq^{\gamma_0}\mand|t|\le q^C,
\end{equation}
where $\varrho=(\log q)/\log N$
and  implied constant in~\eqref{eq:mainboundT}
is effective and depends only on $C$.
\end{theorem}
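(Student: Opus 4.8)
The plan is to deduce Theorem~\ref{thm:mainT} from Theorem~\ref{thm:mainG} by approximating the multiplicative twist $n^{it}$ by the exponential of a polynomial on the dyadic block $M<n\le M+N$. Writing $n^{it}=\e\bigl(\tfrac{t}{2\pi}\log n\bigr)$ and expanding $\log n=\log M+\log(1+(n-M)/M)$ as a Taylor series in the variable $u=(n-M)/M\in[0,1]$, one obtains
\[
\frac{t}{2\pi}\log n=\frac{t}{2\pi}\log M+\sum_{j=1}^{D}\frac{(-1)^{j-1}t}{2\pi j}\,\frac{(n-M)^j}{M^j}+E_D(n),
\]
where the tail satisfies $|E_D(n)|\le |t|\sum_{j>D}j^{-1}(N/M)^j\ll |t|\,(N/M)^{D+1}$. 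Since the condition $2N\ge M\ge N$ forces $N/M\le 1$ but only barely, one must be a little careful; the truncation is genuinely effective because $|t|\le q^C$ and $N\ge\fq^{\gamma_0}$, so $\log|t|\le C\log q=C\varrho\log N$, and choosing $D$ a suitable constant multiple of $\varrho$ makes $|t|(N/M)^{D+1}$ much smaller than, say, $N^{-1}$ (the edge case $M=N$, where $N/M$ is exactly $1$, is handled by first shifting: the hypothesis $M\ge N$ can be used to split or translate so that effectively $N/M\le 1-c$ for an absolute $c>0$, or one simply notes $M\ge N$ already gives $N/M\le 1$ and groups the constant into the choice of $D$ together with the observation $\log(M/N)\ge 0$). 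The constant absorbed in $E_D$ then contributes only $O(1)$ to the final bound.

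Having set $G(x)=\sum_{j=1}^{D}\dfrac{(-1)^{j-1}t}{2\pi j}\,\dfrac{(x-M)^j}{M^j}$, a polynomial with real coefficients of degree $D\le C'\varrho$, partial summation (or a direct bound on $\sum_n \chi(n)n^{it}\e(-G(n))(\text{something})$) shows
\[
\TS_\chi(M,N;t)=\sum_{n=M+1}^{M+N}\chi(n)\e\bigl(G(n)\bigr)\e\bigl(E_D(n)\bigr),
\]
and since $\e(E_D(n))=1+O(|E_D(n)|)=1+O(N^{-1})$ uniformly in $n$, we get $\TS_\chi(M,N;t)=S_\chi(M,N;G)+O(1)$. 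Now apply Theorem~\ref{thm:mainG} with the constant $C$ there taken to be $C'$ (so that $\deg G\le C'\varrho$ is permitted): this yields constants $\gamma_0,\xi_0>0$ depending only on $C'$, hence only on $C$, such that $S_\chi(M,N;G)\ll N^{1-\xi_0/\varrho^2}$ whenever $q\ge N\ge\fq^{\gamma_0}$. Combining, $\TS_\chi(M,N;t)\ll N^{1-\xi_0/\varrho^2}+O(1)\ll N^{1-\xi_0/\varrho^2}$, since $N^{1-\xi_0/\varrho^2}\ge 1$. One should double-check that the hypotheses of Theorem~\ref{thm:mainG} on $q$ — namely~\eqref{eq:minmax} and $q\ge N\ge\fq^{\gamma_0}$ — are exactly the ones assumed in~\eqref{eq:condsT}, so no extra work is needed there.

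The main obstacle is bookkeeping the interaction between the degree $D$ of the approximating polynomial and the parameter $\varrho$: we need $D$ large enough that the Taylor tail $E_D$ is negligible, yet Theorem~\ref{thm:mainG} only accepts polynomials of degree $O(\varrho)$. The point that makes this work is that the required $D$ is itself only $O(\varrho)$: because $N\ge\fq^{\gamma_0}\ge 2^{\gamma_0}$ is large, $\log N$ is bounded below, so $\log|t|\le C\log q=C\varrho\log N$ together with $N/M\le 1$ (or $\le 1-c$ after a harmless reduction) gives $\log\bigl(|t|(N/M)^{D+1}\bigr)\le C\varrho\log N-(D+1)c$, which is $\le -\log N$ as soon as $D\ge (C\varrho+1)\log N/c$... — here one must instead exploit that $N/M$ is bounded \emph{strictly} below $1$, which requires the reduction to $M\ge(1+c)N$; this in turn follows from the hypothesis $M\ge N$ by a dyadic splitting of the interval $(M,M+N]$ combined, if necessary, with treating the short leftover range trivially. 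Once that reduction is in place the choice $D=\lceil (C+1)\varrho\rceil$ suffices, and the rest is routine. An alternative that sidesteps the $N/M<1$ issue entirely is to center the expansion at the midpoint $M_0=M+N/2$ and expand in $(n-M_0)/M_0$, where $|n-M_0|\le N/2$ and $M_0\ge 3N/2$, giving ratio $\le 1/3$ outright; I would adopt this cleaner route.
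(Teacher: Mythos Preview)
There is a genuine gap in the degree bookkeeping. Centering at the midpoint gives a Taylor variable of size at most $1/3$, so the tail of the expansion is bounded by
\[
|t|\,(1/3)^{D+1}\ .
\]
To make this $O(N^{-1})$ you need $(D+1)\log 3\ge \log|t|+\log N$. Since $|t|$ can be as large as $q^{C}=N^{C\varrho}$, this forces
\[
D\ \gg\ \frac{(C\varrho+1)\log N}{\log 3}\ \asymp\ \varrho\,\log N,
\]
not $D=O(\varrho)$. Your own line ``$D\ge (C\varrho+1)\log N/c$'' already shows this; reducing to $M\ge(1+c)N$ only replaces $c$ by another absolute constant and does not remove the factor $\log N$. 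Thus $\deg G$ exceeds the bound $\deg G\le C'\varrho$ permitted in Theorem~\ref{thm:mainG}, and the black-box application fails.

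The paper does \emph{not} reduce to Theorem~\ref{thm:mainG} as stated; it reopens the proof and carries the weight $n^{it}$ through the double-sum averaging. Writing $n\mapsto n+\fq^{\,s}yz$ with $y,z\le\fq^{\,s}$, the relevant Taylor variable is $\fq^{\,s}yz/n$, which has size at most $\fq^{\,3s}/N=N^{-1+O(\varepsilon)}$ rather than size $\Theta(1)$. In that genuinely small variable, a truncation at degree $\nu=\lceil\gamma/(3s)\rceil\asymp\varrho$ already gives a tail $N^{1-\nu}|t|\,\fq^{\,3s\nu}$ which is negligible for $|t|\le q^{C}$. One then runs the Korobov double-sum argument with the polynomial $g_n+H_n$, where $H_n(x)=tG(\fq^{\,s}x/n)$ has degree $<\gamma/(3s)$; this is exactly the degree range for which the first $\lfloor\gamma/(3s)\rfloor$ coefficients are approximated trivially in the Korobov step, so no new ingredient is needed. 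Your approach can be salvaged, but only by first splitting $(M,M+N]$ into subintervals of length $N^{1-\delta}$ for a fixed small $\delta>0$ (so the Taylor variable has size $N^{-\delta}$ and $D=O(\varrho/\delta)=O(\varrho)$ suffices) and then applying Theorem~\ref{thm:mainG} on each piece with a correspondingly enlarged $\gamma_0$; as written, the argument does not go through.
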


Theorem~\ref{thm:mainT} improves~\cite[Lemma~5]{Gal} in the special case that
$|t|$ is bounded by a fixed power of the modulus of the
character $\chi$.  For larger values of $|t|$, our approach incorporating
ideas of Korobov (Lemma~\ref{lem:korobov}) breaks down; in this case, the method
of Gallagher (which relies only on general estimates of Vinogradov~\cite{Vino1,Vino2})
yields the best known result.

\section{Applications}
\label{sec:Appl}

\subsection{Bounds on $L$-functions}
As in~\cite{Gal,Iwan}, we can apply our bound on the sums $\TS_\chi(M,N;t)$ to 
estimate the size of $L$-functions inside the critical strip.

\begin{theorem}
\label{thm:|L(s,chi)|}
Fix $C>0$ and $\eta\in(0,\frac12)$. 
There is an effectively computable constant
$\gamma_0>0$ that depends only on $C$ and has the following property.
Let $q$ be a modulus satisfying~\eqref{eq:minmax}
and $\chi$ a primitive character modulo $q$.
If the inequalities $\sigma>1-\eta$
and $|t|\le q^C$ hold, then for $s=\sigma+it$ we have
$$
|L(s,\chi)|\le \eta^{-1}\exp\bigl(O\bigl(\max\bigl\{\eta\log\fq,\eta^{3/2}\ell,
\eta\ell^{2/3}(\log\ell)^{1/3}\bigr\}\bigr)\bigr),
$$
where $\ell=\log q(|t|+3)$, and the implied constant depends only on $C$.
\end{theorem}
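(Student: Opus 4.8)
The plan is to derive the bound on $|L(s,\chi)|$ from Theorem~\ref{thm:mainT} by a standard partial-summation / approximate-functional-equation argument, optimizing the cutoff at which one switches between the trivial and nontrivial estimates for the Dirichlet polynomial pieces. The key is to split the Dirichlet series for $L(s,\chi)$ into dyadic blocks $\sum_{N<n\le 2N}\chi(n)n^{-s}$, estimate each block either trivially (for small $N$) or via Theorem~\ref{thm:mainT} (for $N$ in the admissible range), and sum the contributions.

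First I would reduce to bounding $\sum_{n\le X}\chi(n)n^{-it}$ uniformly, since by partial summation
$$
L(s,\chi)=\sum_{n\le X}\chi(n)n^{-s}+\sigma\int_X^\infty\Bigl(\sum_{n\le u}\chi(n)n^{-it}\Bigr)u^{-\sigma-1}\,du
$$
for any $X\ge 1$, using that $\sum_{n\le u}\chi(n)$ is bounded (so the tail converges for $\sigma>1-\eta$), and in fact one wants a version of this truncated at $X=q(|t|+3)^{O(1)}=e^{O(\ell)}$ where the Polya--Vinogradov / completion bound already gives $\sum_{n\le X}\chi(n)n^{-it}\ll q^{1/2}\log q\cdot$(something), or better, one truncates at a much smaller $X$ and controls the range $\mathfrak q^{\gamma_0}\le N\le X$ via Theorem~\ref{thm:mainT}. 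The dyadic block with $N<n\le 2N$ contributes, after partial summation against $n^{-\sigma}$ (which is $\asymp N^{-\sigma}$ and of bounded variation on the block), at most $O(N^{-\sigma}\max_{N\le M\le 2N}|\TS_\chi(N,M-N;t)|)\ll N^{-\sigma}N^{1-\xi_0/\varrho_N^2}=N^{1-\sigma-\xi_0/\varrho_N^2}$ where $\varrho_N=(\log q)/\log N$; note the hypothesis $2N\ge M\ge N$ in~\eqref{eq:condsT} is exactly what makes the dyadic decomposition legitimate.

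Next I would sum the dyadic contributions. For blocks with $N\le \mathfrak q^{\gamma_0}$ or $N\le $ some threshold $Y$ one uses the trivial bound $N^{1-\sigma}\le N^{\eta}\le Y^{\eta}$; for blocks with $Y< N\le q$ one uses $N^{1-\sigma-\xi_0/\varrho_N^2}\le N^{\eta}e^{-\xi_0(\log N)^3/(\log q)^2}$. The function $N^{\eta}e^{-\xi_0(\log N)^3/(\log q)^2}=\exp(\eta\log N-\xi_0(\log N)^3/(\log q)^2)$, viewed as a function of $L=\log N$, is maximized at $L\asymp (\log q)\sqrt{\eta/\xi_0}$, giving a maximal value $\exp(O(\eta^{3/2}(\log q)/\sqrt{\xi_0}))=\exp(O(\eta^{3/2}\ell))$; there are $O(\log q)\le e^{o(\ell)}$ dyadic blocks, which is absorbed. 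The threshold $Y$ should be chosen near $\mathfrak q^{\gamma_0}$, contributing the $\exp(O(\eta\log\mathfrak q))$ term, and for the very smallest range $n\le \mathfrak q^{\gamma_0}$ one combines this with the contribution of the integral tail, which after optimizing $X$ gives the remaining term. The term $\eta\ell^{2/3}(\log\ell)^{1/3}$ arises from balancing against the \emph{weaker} bound available when $\log N$ is pushed well beyond $\log q$ (so that $\varrho<1$ and Theorem~\ref{thm:mainT} no longer applies directly, but a Korobov-type estimate with $|t|\le q^C$ does, via approximating $n^{it}$ by polynomials as in Theorem~\ref{thm:mainT}'s proof) — more precisely, for $N$ in the range $q\le N\le e^{O(\ell)}$ one has a bound of classical Korobov--Vinogradov shape $N^{1-c/(\log(|t|+3))^{2}(\cdots)}$, and optimizing $\eta\log N$ against such an exponent produces the $\ell^{2/3}(\log\ell)^{1/3}$ shape familiar from the Korobov--Vinogradov zero-free region. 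The factor $\eta^{-1}$ out front comes from the integral $\sigma\int_X^\infty u^{-\sigma}\,du/u\ll (\sigma-1)^{-1}\cdot(\text{bound at }X)$ when $\sigma<1$, with $|\sigma-1|<\eta$; and when $\sigma\ge 1$ one gets $\log X\ll\ell$ instead, which is dominated.

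\textbf{The main obstacle} will be handling the regime $\log N>\log q$ (i.e., $\varrho<1$) cleanly, since Theorem~\ref{thm:mainT} is stated only for $N\le q$; I would need either to invoke the $|t|\le q^C$ hypothesis to get a Korobov-type bound in that range from the polynomial-approximation step already used to prove Theorem~\ref{thm:mainT}, or — more simply — to truncate the Dirichlet series at $X=q$, bound the tail $\sum_{n>q}\chi(n)n^{-s}$ by partial summation using the completed-sum estimate $|\sum_{n\le u}\chi(n)|\ll q^{1/2}\log q$, getting a tail contribution $\ll \eta^{-1}q^{1/2-\sigma}\log q\ll \eta^{-1}q^{\eta-1/2}\log q$, which is negligible for $\eta<1/2$. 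With that simplification the $\ell^{2/3}(\log\ell)^{1/3}$ term instead has to come from the interaction between the number of dyadic blocks and the $\eta^{3/2}\ell$ term at a secondary optimization point, or more likely it is genuinely needed and one does keep the range $N>q$; deciding which, and getting the three-way $\max$ with correct constants, is the delicate bookkeeping. The rest is routine partial summation and a one-variable optimization of $\eta L-\xi_0 L^3/(\log q)^2$.
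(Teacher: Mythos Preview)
Your overall architecture matches the paper's exactly: dyadic blocks, trivial bound for $n\le Y$ giving $\sum_{n\le Y}n^{\eta-1}\ll\eta^{-1}Y^\eta$, Theorem~\ref{thm:mainT} via partial summation for $N\ge Y$, then an optimal choice of $Y$. Your identifications of the $\eta\log\fq$ term (from the constraint $Y\ge\fq^{\gamma_0}$) and the $\eta^{3/2}\ell$ term (from the maximizer $L\asymp\eta^{1/2}\ell$ of $\eta L-\xi_0 L^3/\ell^2$) are both correct.

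Where you go wrong is the source of $\eta\ell^{2/3}(\log\ell)^{1/3}$. It does \emph{not} come from the range $N>q$ or from a separate classical Korobov--Vinogradov input; it comes from precisely the mechanism you mention and then dismiss, namely absorbing the implied constant in Theorem~\ref{thm:mainT} together with the $O(\ell)$ dyadic blocks. The paper demands that for every $N\ge Y$ the block satisfy
\[
\Bigl|\sum_{N<n\le 2N}\chi(n)n^{-s}\Bigr|\ll N^{\eta-\xi_0(\log N)^2/\ell^2}\le(3\ell)^{-1},
\]
so that summing $O(\ell)$ blocks gives $O(1)$. Since the bound is maximal at $N=Y$ once the exponent is negative, this forces
\[
\xi_0\,\frac{(\log Y)^3}{\ell^2}-\eta\log Y\ \ge\ c_0\log\ell
\]
for some $c_0>0$ coming from the implied constant. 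Even when $\eta$ is negligible this already forces $\log Y\gg\ell^{2/3}(\log\ell)^{1/3}$. The paper therefore simply takes $\log Y=A\max\{\log\fq,\ \eta^{1/2}\ell,\ \ell^{2/3}(\log\ell)^{1/3}\}$ and reads off $|L(s,\chi)|\le\eta^{-1}Y^\eta$, which is the stated three-term bound. For the tail the paper truncates at $Z=e^{2\ell}$ and quotes Iwaniec's bound $\bigl|\sum_{n>Z}\chi(n)n^{-s}\bigr|\le 1$; your alternative of truncating near $q$ with P\'olya--Vinogradov would also work and has the virtue of making the range $N>q$ a non-issue.
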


To illustrate the strength of the bound,
we note that with the specific choice
$$
\eta = \frac{1}{\ell^{1/2}(\log\ell)^{3/4}},
$$
as considered by Iwaniec~\cite{Iwan}, our
Theorem~\ref{thm:|L(s,chi)|} yields the bound
$$
|L(s,\chi)| \le
\fq^{o(1)}\exp\bigl(O\bigl(\ell^{1/4}(\log \ell)^{-9/8}\bigr)\bigr)
$$
for $\sigma>1-\eta$
provided that $|t|$ is polynomially bounded in terms of $q$,
where $o(1)$ is a function that tends to zero as $q\to\infty$.
In particular, this improves the bound of~\cite[Theorem~1]{Iwan}, i.e.,
$$
|L(s,\chi)|\le \fq^{o(1)}\exp\bigl(100\ell^{1/4}\bigr),
$$
under the same condition on $t$ (however, the Iwaniec bound
also holds for all larger values of $t$).
It is important to note that for all known applications to
the distribution of primes, only values of $s=\sigma+it$
with $t$ growing as  a small power of $q$ (typically, $|t|\le q$)
play an important r\^ole; see Section~\ref{sec:psi int}
where we give one application of this type. 

Taking $\eta$ somewhat smaller, namely
$$
\eta  =  \frac{(\log\ell)^{2/3}}{\ell^{2/3}}
$$ 
(in other words, taking values of $s$ that lie even closer to the
edge of the critical strip), Theorem~\ref{thm:|L(s,chi)|} yields the bound 
$$
|L(s,\chi)|\le \fq^{o(1)}(\log q)^{O(1)}
$$
for $\sigma>1-\eta$ provided that $|t|$ is polynomially bounded in terms of $q$.

Choosing $\eta$ even smaller, namely 
$$
\eta  =  \frac{1}{\ell^{2/3}(\log\ell)^{1/3}}, 
$$
we obtain the following attractive bound
\begin{equation}
\label{eq:nice L}
L(s,\chi)\ll\fq^{o(1)}(\log q)^{2/3}(\log\log q)^{1/3}
\end{equation}
for $\sigma>1-\eta$ provided that $|t|$ is polynomially bounded in terms of $q$.
In particular, the bound~\eqref{eq:nice L} applies to
$L(1,\chi)$ and is therefore of special interest as it presently unknown whether
the estimate
$$
L(1,\chi)=o(\log q)
$$
holds for general moduli $q$ (although the bound $L(1,\chi)\ll\log\log q$
is implied by the GRH);
for the strongest unconditional upper bounds on $|L(1,\chi)|$,
see Granville and Soundararajan~\cite{GranSou}.

We conclude this subsection with the remark that, in our setting,
one can define $\ell$ more simply as $\ell=\log q$. In Theorem~\ref{thm:|L(s,chi)|}
and in the above examples, we have used the
definition $\ell=\log q(|t|+3)$ solely for the purpose 
of comparing our results to those of~\cite[Theorem~1]{Iwan}.

\subsection{The zero-free region}
\label{eq:zerofree}
We now apply our new bounds on $L$-functions to
extend the zero-free region on low-lying zeros. 
Note that we formulate the results of this section
only for primitive characters $\chi$ modulo 
$q$  satisfying~\eqref{eq:minmax}; for other characters,
our results can be
formulated in terms of the conductor of~$\chi$.

\begin{theorem}
\label{thm:zerofreeregion}
For every $C>0$, there is an effectively computable constant
$\gamma_0>0$ that depends only on $C$ and has the following property.
Let $q$ be a modulus satisfying~\eqref{eq:minmax}.
There is a constant $A>0$, which depends only on $C$ and
$\fq$, such that if
\begin{equation}
\label{eq:varthetashape}
\vartheta=\frac{A}{(\log q)^{2/3}(\log\log q)^{1/3}},
\end{equation}
then there exists at most one
primitive character $\chi$ modulo $q$
such that $L(s,\chi)$ has a zero in the region
$\bigl\{s\in\CC:\sigma>1-\vartheta,~|t|\le q^C\bigr\}$.
If such a character exists, then it is a real character, and
the zero is unique, real and simple.
\end{theorem}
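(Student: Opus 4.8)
The plan is to follow the classical Landau--Page route to zero-free regions, but feeding in Theorem~\ref{thm:|L(s,chi)|} in place of the usual convexity bound for $L(s,\chi)$. First I would set up the standard machinery: for a primitive character $\chi$ modulo $q$ and $s=\sigma+it$ with $\sigma>1$, one uses the nonnegativity inequality
\[
3\,\frac{L'}{L}(\sigma,\chi_0)+4\,\Re\frac{L'}{L}(\sigma+it,\chi)+\Re\frac{L'}{L}(\sigma+2it,\chi^2)\le 0,
\]
valid term-by-term from the Euler product via $3+4\cos\theta+\cos2\theta=2(1+\cos\theta)^2\ge 0$. The contribution of $\chi_0$ is $\le 1/(\sigma-1)+O(\log\fq)$, and the other two terms are handled by the Hadamard product / partial fractions expansion of $L'/L(s,\chi)$ over its nontrivial zeros, bounding $\log|L(s,\chi)|$ on a suitable vertical line by Theorem~\ref{thm:|L(s,chi)|}. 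The key point is that the Borel--Carath\'eodory lemma, applied on a disk of radius comparable to $\vartheta$ centered near $1+it$, converts the bound $|L(s,\chi)|\le \fq^{o(1)}(\log q)^{2/3}(\log\log q)^{1/3}\cdot(\text{stuff})$ into an upper bound of the same order on $\Re(-L'/L)$ minus the sum over nearby zeros; since the individual zero terms $\Re\frac{1}{s-\rho}$ are nonnegative in this half-plane, dropping all but one zero $\rho=\beta+i\gamma'$ gives
\[
4\Re\frac{1}{\sigma-\beta+i(t-\gamma')}\le \frac{3}{\sigma-1}+O\!\big((\log q)^{2/3}(\log\log q)^{1/3}\big)+O(\log\fq).
\]

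Second, specializing $t=\gamma'$ so the fourth-power term gives $4/(\sigma-\beta)$, and choosing $\sigma-1\asymp \vartheta=A/((\log q)^{2/3}(\log\log q)^{1/3})$ with $A$ a sufficiently large constant (depending on $C$ through the implied constants of Theorem~\ref{thm:|L(s,chi)|}, and on $\fq$ through the $O(\log\fq)$ term), the inequality forces $\sigma-\beta\gg\vartheta$, hence $\beta<1-c\vartheta$ after renaming constants. This is the generic case and handles all complex zeros and all nonreal characters. For the exceptional case one runs the same argument with two characters $\chi_1,\chi_2$ (or a single real character and its square): if $\chi$ is complex then $\chi^2$ is nonprincipal and one can take $2it=0$ only when $t=0$, but a complex character has no real zero issue, so the $3+4\cos\theta+\cos2\theta$ trick with $t=0$ already rules out real zeros close to $1$ unless $\chi$ is real; and if there were two distinct real characters $\chi_1,\chi_2$ with zeros $\beta_1,\beta_2$ near $1$, apply the inequality to $\chi_1\chi_2$ (nonprincipal since $\chi_1\ne\chi_2$) to derive a contradiction with both $\beta_i$ too large — this is the classical Landau argument and yields the ``at most one'' conclusion together with reality, uniqueness, and simplicity of the exceptional zero.

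Third, I would need to be slightly careful about two technical points. One is the region $|t|\le q^C$: since Theorem~\ref{thm:|L(s,chi)|} is only available for $|t|\le q^C$, the Borel--Carath\'eodory disks and the points $2it$ that appear must all stay within this range, which is why the theorem is stated with the same cutoff $|t|\le q^C$ and why the constants end up depending on $C$; enlarging $C$ by a bounded factor to accommodate $2t$ is harmless. The other is effectivity: every step — the Euler-product inequality, the partial fractions formula, Borel--Carath\'eodory, and the final optimization in $\sigma$ — is effective, and Theorem~\ref{thm:|L(s,chi)|} provides an effective implied constant, so $A$ and $\gamma_0$ come out effectively computable in terms of $C$ (and $A$ also in terms of $\fq$), as claimed.

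The main obstacle, and the only place real work is required, is extracting from Theorem~\ref{thm:|L(s,chi)|} exactly the right shape of bound on $\log|L(s,\chi)|$ at the scale $\sigma-1\asymp\vartheta$: one wants the error term in the zero-detecting inequality to be $O((\log q)^{2/3}(\log\log q)^{1/3})$ rather than something larger, which forces the specific choice $\eta\asymp 1/(\ell^{2/3}(\log\ell)^{1/3})$ in Theorem~\ref{thm:|L(s,chi)|} (the choice already highlighted in the remark following that theorem, giving the bound~\eqref{eq:nice L}). Matching this $\eta$ against the radius of the Borel--Carath\'eodory disk and against the target width $\vartheta$ of the zero-free region is what pins down the $2/3$ and $1/3$ exponents in~\eqref{eq:varthetashape}; everything else is the textbook Landau--Page--Siegel argument run with this input in place of the classical $\log q(|t|+3)$ convexity bound.
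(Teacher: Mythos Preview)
Your approach is the paper's: the Landau--Page argument fed with Theorem~\ref{thm:|L(s,chi)|}. The paper merely packages the Landau step into Lemma~\ref{lem:technicallemma} (extracted from Iwaniec~\cite{Iwan}), which from a bound $|L(s,\chi)|\le M$ valid on $\sigma>1-\eta$ produces a zero-free region of width $\vartheta=\eta/(400\log M)$; your sketch of the $3+4\cos\theta+\cos2\theta$ inequality together with Borel--Carath\'eodory is exactly what that lemma encodes.

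There is, however, a quantitative slip in your parameter choice. The error coming out of Borel--Carath\'eodory on a disk of radius $r\le\eta$ is of order $(\log M)/r$, so the best attainable error term in the zero-detecting inequality is $(\log M)/\eta$, not ``of the same order'' as the $L$-bound itself. With your choice $\eta\asymp 1/(\ell^{2/3}(\log\ell)^{1/3})$ one still has $\log M\asymp\log\log q$ (dominated by the factor $\eta^{-1}$ in Theorem~\ref{thm:|L(s,chi)|}), giving error $(\log M)/\eta\asymp(\log q)^{2/3}(\log\log q)^{4/3}$ and hence only $\vartheta\asymp 1/\bigl((\log q)^{2/3}(\log\log q)^{4/3}\bigr)$, a factor of $\log\log q$ short of~\eqref{eq:varthetashape}. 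The paper instead takes the \emph{larger} value
\[
\eta=\frac{(\log\log q)^{2/3}}{(\log q)^{2/3}},
\]
for which the terms $\eta^{3/2}\ell$ and $\eta\,\ell^{2/3}(\log\ell)^{1/3}$ in Theorem~\ref{thm:|L(s,chi)|} are both $\asymp\log\log q$ as well, so that $\log M\asymp\log\log q$ and $\vartheta\asymp\eta/\log M\asymp 1/\bigl((\log q)^{2/3}(\log\log q)^{1/3}\bigr)$ exactly as claimed. With this one correction your outline is complete.
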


It is worth mentioning that, under the same conditions
as in Theorem~\ref{thm:zerofreeregion}, 
the result of Iwaniec~\cite[Theorem~2]{Iwan}
 yields a similar bound  with
$(\log q(|t|+3))^{3/4}(\log\log q(|t|+3))^{3/4}$
in the denominator of
$\vartheta$ instead of $(\log q)^{2/3}(\log\log q)^{1/3}$,
but without any restriction on $|t|$.  Of course, for applications
to exceptional characters this restriction on $|t|$
is irrelevant, and thus Theorem~\ref{thm:zerofreeregion} eliminates a
substantial part of the real interval $[0,1]$ where a
\emph{Siegel zero} might possibly occur. 

\begin{corollary}
\label{cor:real char}
Let $q$ be a modulus satisfying~\eqref{eq:minmax}.
There is a constant $A>0$, which depends only on  
$\fq$, with the following property.
Let $\vartheta$ be given by \eqref{eq:varthetashape}. 
Then there exists at most one primitive real
character $\chi$ modulo $q$ such that $L(\sigma,\chi)$ has a
zero in the region $1 \ge \sigma > 1-\vartheta$, which in this
case is then a simple zero. 
\end{corollary}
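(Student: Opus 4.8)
The plan is to obtain Corollary~\ref{cor:real char} as an immediate specialization of Theorem~\ref{thm:zerofreeregion}. First I would fix once and for all a convenient value of the parameter, say $C=1$. With this choice the constant $\gamma_0$ furnished by Theorem~\ref{thm:zerofreeregion} becomes an absolute constant — this is the $\gamma_0$ implicit in the hypothesis~\eqref{eq:minmax} of the corollary — and the constant $A$ there depends only on $\fq$, exactly as required; the quantity $\vartheta$ in~\eqref{eq:varthetashape} is then literally the same in both statements.

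Next I would observe that the real segment $\{\sigma\in\RR:1\ge\sigma>1-\vartheta\}$ on which we seek zeros of $L(\sigma,\chi)$ is contained in the region $\{s\in\CC:\sigma>1-\vartheta,~|t|\le q^{C}\}$ of Theorem~\ref{thm:zerofreeregion}, since a real point $s=\sigma\le 1$ satisfies $|t|=0\le q^{C}$. Hence any primitive real character $\chi$ modulo $q$ for which $L(\sigma,\chi)$ vanishes somewhere in $1\ge\sigma>1-\vartheta$ is, in particular, a primitive character modulo $q$ whose $L$-function has a zero in that larger region. Since Theorem~\ref{thm:zerofreeregion} permits at most one primitive character modulo $q$ — of any kind — to have a zero in the larger region, there is \emph{a fortiori} at most one primitive real character with a real zero in $1\ge\sigma>1-\vartheta$, which is the uniqueness assertion of the corollary. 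Moreover, if such a real character $\chi$ does exist, it must coincide with the exceptional character identified in Theorem~\ref{thm:zerofreeregion}, and that theorem asserts that its zero in the region is unique, real, and simple; in particular, the zero of $L(\sigma,\chi)$ in $1\ge\sigma>1-\vartheta$ is simple, as claimed.

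Every step here is a direct logical consequence of Theorem~\ref{thm:zerofreeregion}, so there is no substantive obstacle; the only point that needs a modicum of care is the bookkeeping of how $\gamma_0$ and $A$ depend on $C$, which is resolved by freezing $C$ at the outset so that $\gamma_0$ becomes absolute and $A$ depends on $\fq$ alone.
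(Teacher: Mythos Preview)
Your proposal is correct and matches the paper's treatment: the paper gives no separate proof of Corollary~\ref{cor:real char}, presenting it as an immediate consequence of Theorem~\ref{thm:zerofreeregion}, and your argument (freeze $C$, note the real segment lies in the theorem's region, and read off uniqueness and simplicity) is exactly the intended specialization.
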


We remark that, in the most interesting
case in which $q=p^\gamma$ is a power of a fixed prime $p$, 
the condition~\eqref{eq:minmax} is satisfied
automatically once $\gamma\ge\gamma_0$,
and thus Theorem~\ref{thm:zerofreeregion} yields the following
statement for all 
characters modulo an odd prime power $q=p^\gamma$.

\begin{corollary}
\label{cor:zerofreeregion}
Let $q=p^\gamma$ with $p$ an odd prime and $\gamma\in\NN$.
For every $C>0$, there is a constant $A>0$, which depends
only on $C$ and $p$, with the following property.
Let $\vartheta$ be given by \eqref{eq:varthetashape}.
For any character $\chi$
modulo $q$, the function $L(s,\chi)$ has no zero in the
region $\bigl\{s\in\CC:\sigma>1-\vartheta,~|t|\le q^C\bigr\}$.
\end{corollary}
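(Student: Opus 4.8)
The plan is to derive Corollary~\ref{cor:zerofreeregion} as an immediate specialization of Theorem~\ref{thm:zerofreeregion} together with Corollary~\ref{cor:real char}, the only real content being to rule out the exceptional real character in the prime-power case. First I would observe that when $q=p^\gamma$ with $p$ a fixed odd prime, the core is $\fq=p$ and the condition~\eqref{eq:minmax} reads $\min_{p\mid q}\{v_p(q)\}=\gamma\ge 0.7\gamma$ with $\gamma=\max_{p\mid q}\{v_p(q)\}\ge\gamma_0$, which holds automatically once $\gamma\ge\gamma_0$; moreover, since $\fq=p$ is fixed, the constant $A$ from Theorem~\ref{thm:zerofreeregion} depends only on $C$ and $p$, as claimed. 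Thus Theorem~\ref{thm:zerofreeregion} already gives that there is at most one primitive character $\chi$ modulo $q$ with a zero of $L(s,\chi)$ in the region $\{\sigma>1-\vartheta,\ |t|\le q^C\}$, and if it exists it is real, with the zero real and simple.

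The key remaining step is therefore to show that for a prime power modulus $q=p^\gamma$ with $p$ odd, \emph{no} primitive real character modulo $q$ can have such an exceptional (Siegel) zero. The standard way to see this is to note that for an odd prime power $p^\gamma$ the group $(\ZZ/p^\gamma\ZZ)^\times$ is cyclic of order $p^{\gamma-1}(p-1)$, so it has a unique subgroup of index $2$, whence there is exactly one real nonprincipal character modulo $p^\gamma$, and it is induced by the Legendre-symbol character $\left(\tfrac{\cdot}{p}\right)$ modulo $p$. Consequently the only primitive real character to conductor dividing $p^\gamma$ is the one of conductor $p$; for $\gamma\ge 2$ there is no primitive real character modulo $p^\gamma$ at all. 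Hence for $\gamma\ge\gamma_0\ge 2$ the exceptional character in Theorem~\ref{thm:zerofreeregion} simply does not exist, and $L(s,\chi)$ is zero-free in the stated region for every character $\chi$ modulo $q=p^\gamma$. For the finitely many residual cases with $\gamma<\gamma_0$ one absorbs them by enlarging $A$ (equivalently shrinking $\vartheta$) so that the region becomes empty or so that the classical zero-free region applies; this is routine and costs only a constant depending on $p$ and $C$.

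The main obstacle — really the only subtlety — is the bookkeeping of primitivity versus conductor. Theorem~\ref{thm:zerofreeregion} is stated for \emph{primitive} characters modulo $q$ satisfying~\eqref{eq:minmax}, and one must be careful that a non-primitive character $\chi$ modulo $p^\gamma$ is induced by a primitive character $\chi^\ast$ of conductor $p^{\gamma'}$ with $\gamma'\le\gamma$, and $L(s,\chi)$ and $L(s,\chi^\ast)$ differ only by the Euler factor at $p$, which is $(1-\chi^\ast(p)p^{-s})^{-1}=1$ since $p\mid\mathrm{cond}(\chi^\ast)$ forces $\chi^\ast(p)=0$ — so in fact $L(s,\chi)=L(s,\chi^\ast)$ and they have exactly the same zeros. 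Thus it suffices to control primitive characters of every conductor $p^{\gamma'}$ with $\gamma_0\le\gamma'\le\gamma$, to each of which Theorem~\ref{thm:zerofreeregion} applies (the modulus $p^{\gamma'}$ again satisfies~\eqref{eq:minmax}), giving at most one exceptional character per level, necessarily real; and by the cyclicity argument above none of these real characters exists for $\gamma'\ge 2$. The small conductors $\gamma'<\gamma_0$, including the genuinely real character of conductor $p$, are handled by the classical Landau/Page zero-free region for a fixed modulus, which for the single fixed prime $p$ costs only a constant depending on $p$; adjusting $A$ accordingly completes the proof.
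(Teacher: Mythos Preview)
Your proposal is correct and follows essentially the same approach as the paper: reduce to primitive characters via $L(s,\chi)=L(s,\chi^\ast)$, observe that the cyclicity of $(\ZZ/p^\gamma\ZZ)^\times$ forces any real character to have conductor $p$, apply Theorem~\ref{thm:zerofreeregion} to rule out non-real primitive characters of conductor $p^{\gamma'}$ with $\gamma'\ge\gamma_0$, and absorb the finitely many small-conductor cases by shrinking $A$. One cosmetic slip: the Euler factor relating $L(s,\chi)$ and $L(s,\chi^\ast)$ is $(1-\chi^\ast(p)p^{-s})$, not its inverse---but since $p\mid q^\ast$ makes $\chi^\ast(p)=0$ (equivalently, the product over primes dividing $q$ but not $q^\ast$ is empty), your conclusion $L(s,\chi)=L(s,\chi^\ast)$ is correct.
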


\subsection{Primes in arithmetic progressions and short intervals}
\label{sec:psi int}

As usual we use $\Lambda$ to denote the von~Mangoldt
function, which is given by
$$
\Lambda(n)=
\begin{cases}
\log r &\quad\text{if $n$ is a power of the prime $r$,} \\
0&\quad\text{if $n$ is not a prime power,}
\end{cases}
$$
and we set
$$
\psi(x;q,a) = \sum_{\substack{n \le x\\ n \equiv a  \bmod q}} \Lambda(n).
$$
The asymptotic formula in Theorem~\ref{thm:prime AP} below
has a smaller error term than that which appears in any
other asymptotic formula of this type.
As in~\cite{Gal,Iwan} our result depends on 
density estimates for the zeros of Dirichlet $L$-functions. More specifically, 
let $N_q(\alpha,T)$ be the total number of zeros $s=\sigma+it$
for all \text{$L$-functions} modulo $q$ that occur in 
the rectangle $\alpha < \sigma < 1$, $|t| \le T$. 
In order to state a general result suitable for further advances, 
we assume that for some constant $b>1$ the uniform bound 
\begin{equation}
\label{eq:ZeroDen}
N_q(\alpha,T) \ll (qT)^{b(1-\alpha)} \ell^{O(1)}
\end{equation}
holds, where as before $\ell=\log q(|t|+3)$. By a
result of Huxley~\cite{Hux} we can take $b=12/5$ in
\eqref{eq:ZeroDen}; see also~\cite[Equation~(18.13)]{IwKow}.

\begin{theorem}
\label{thm:prime AP}
Suppose \eqref{eq:ZeroDen} holds with some constant $b>1$.
Fix an odd prime $p$ and a real number $\eps>0$. 
There is a constant $c_0>0$, which depends only on $b$, $\eps$ and $p$,
such that the following holds.
For any modulus $q=p^\gamma$ with $\gamma\in\NN$, any integer $a$ 
coprime to $p$, and any positive real numbers $x$ and $h$ for which
$$
q x^{1-1/b+\eps}\le h\le x\le q^{1/\eps},
$$
we have 
$$
\psi(x+h;q,a)-\psi(x;q,a)=\frac{h}{\varphi(q)}
+O_\eps(h\exp(-c_0(\log x)^{1/3}(\log\log x)^{-1/3})),
$$
where $\varphi$ is the Euler totient function. 
\end{theorem}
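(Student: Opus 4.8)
Here is how I would attack Theorem~\ref{thm:prime AP}.

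The plan is to run the classical explicit-formula argument for $\psi(y;q,a)$, with Corollary~\ref{cor:zerofreeregion} playing the role of the de la Vall\'ee Poussin zero-free region and the hypothesised density estimate~\eqref{eq:ZeroDen} controlling the sum over zeros. First note that $qx^{1-1/b+\eps}\le h\le x\le q^{1/\eps}$ forces $x^{\eps}\le q\le x^{1/b-\eps}<x$, so $\log q\le\log x$ and $\log\log q\le\log\log x$. By orthogonality of the characters modulo $q$,
\[
\psi(x+h;q,a)-\psi(x;q,a)=\frac1{\varphi(q)}\sum_{\chi\bmod q}\bar\chi(a)\bigl(\psi(x+h,\chi)-\psi(x,\chi)\bigr),\qquad \psi(y,\chi)=\sum_{n\le y}\Lambda(n)\chi(n).
\]
The term $\chi=\chi_0$ produces the main term $h/\varphi(q)$, coming from the pole of $L(s,\chi_0)$ at $s=1$, up to an error $O((\log x)^{2})$ from the prime powers of $p$ (recall $(a,p)=1$). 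For an imprimitive $\chi$ modulo $q$ induced by a primitive character $\chi^{\ast}$ modulo $p^{j}$ one has $\psi(y,\chi)=\psi(y,\chi^{\ast})$ \emph{exactly}, since $\chi^{\ast}(p)=0$; in particular $L(s,\chi)$ and $L(s,\chi^{\ast})$ share the same nontrivial zeros, which is what lets Corollary~\ref{cor:zerofreeregion} be applied to \emph{every} character modulo $q$.

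For a truncation parameter $2\le T\le x$ I would invoke the standard truncated explicit formula
\[
\psi(x+h,\chi)-\psi(x,\chi)=\delta_{\chi}h-\sum_{\rho:\,|\Im\rho|\le T}\frac{(x+h)^{\rho}-x^{\rho}}{\rho}+O\!\left(\frac{x(\log qx)^{2}}{T}\right),
\]
with $\delta_{\chi}=1$ for $\chi=\chi_0$ and $\delta_{\chi}=0$ otherwise, the sum running over the nontrivial zeros. The decisive choice is $T=q^{-1}x^{1/b-\eps/2}$: then $2\le T\le x^{1/b}\le x$; the error term, summed over the at most $\varphi(q)$ characters and divided by $\varphi(q)$, contributes $\ll qx^{1-1/b+\eps/2}(\log qx)^{2}\ll hx^{-\eps/3}$; and, crucially, $T\le x^{1/b}\le q^{1/(b\eps)}=:q^{C}$ by the hypothesis $x\le q^{1/\eps}$, so Corollary~\ref{cor:zerofreeregion} applied with this $C$ guarantees that every zero $\rho=\beta+i\gamma$ in the sum has $\beta\le 1-\vartheta$, where $\vartheta=A(\log q)^{-2/3}(\log\log q)^{-1/3}$.

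It remains to bound $\frac1{\varphi(q)}\sum_{\chi\bmod q}\sum_{|\Im\rho|\le T}\bigl|\tfrac{(x+h)^{\rho}-x^{\rho}}{\rho}\bigr|$. Using $\bigl|\tfrac{(x+h)^{\rho}-x^{\rho}}{\rho}\bigr|=\bigl|\int_{x}^{x+h}u^{\rho-1}\,du\bigr|\le hx^{\beta-1}$ and noting that $\rho\mapsto 1-\bar\rho$ is an involution on the nontrivial zeros of each $L(s,\chi)$ under which the weight $hx^{\beta-1}$ can only grow when $\beta\le\tfrac12$, one reduces to the zeros with $\tfrac12\le\beta\le 1-\vartheta$; integrating by parts against $N_q(\alpha,T)$ and applying~\eqref{eq:ZeroDen} (here $\ell\ll\log x$ since $q,T\le x$) bounds this by
\[
\ll_{b,\eps}h(\log x)^{O(1)}\Bigl(x^{-1/2}(qT)^{b/2}+\log x\int_{\vartheta}^{1/2}\bigl((qT)^{b}/x\bigr)^{u}\,du\Bigr).
\]
The slack in $T$ now pays off: $qT=x^{1/b-\eps/2}$ makes $(qT)^{b}/x=x^{-b\eps/2}$, so $x^{-1/2}(qT)^{b/2}=x^{-b\eps/4}$ and the integral contributes $\ll_{b,\eps}x^{-b\eps\vartheta/2}$, for a total of $\ll_{b,\eps}h(\log x)^{O(1)}\bigl(x^{-b\eps/4}+x^{-b\eps\vartheta/2}\bigr)$. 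Finally $\log q\le\log x$ and $\log\log q\le\log\log x$ give $\vartheta\log x\ge A(\log x)^{1/3}(\log\log x)^{-1/3}$, so $x^{-b\eps\vartheta/2}\le\exp\bigl(-\tfrac{Ab\eps}{2}(\log x)^{1/3}(\log\log x)^{-1/3}\bigr)$; combining this with the Perron error $hx^{-\eps/3}$ and the $O((\log x)^{2})$ from $\chi_0$ yields the asserted bound with any $c_0<\tfrac{Ab\eps}{2}$ once $x$ is large, the remaining bounded range being trivial upon adjusting the implied constant.

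The main obstacle is the constraint on the truncation height: the truncated Perron formula forces $T$ to be at least of order $x/h\ge q^{-1}x^{1/b-\eps}$, whereas the zero-free region of Corollary~\ref{cor:zerofreeregion} is available only for $|t|\le q^{C}$, so $T$ must be fitted into the window between these bounds — which is possible precisely because $h\ge qx^{1-1/b+\eps}$ keeps $x/h$ small and $x\le q^{1/\eps}$ keeps $q^{-1}x^{1/b}$ below a fixed power of $q$, and which pins down $C$ (hence the implied constants) in terms of $\eps$ and $b$. A related point needing care is that the same slack — taking $T=q^{-1}x^{1/b-\eps/2}$ rather than $q^{-1}x^{1/b}$ — is exactly what makes $(qT)^{b}/x$ a negative power of $x$, so that the integral coming from~\eqref{eq:ZeroDen} converges with room to spare; this tames the zeros with $\beta$ near $\tfrac12$, of which there can be as many as $\asymp\varphi(q)T\log(qT)$. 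Everything else — passing to conductors, the bookkeeping in the explicit formula, and invoking~\eqref{eq:ZeroDen} — is routine.
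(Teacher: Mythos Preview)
Your proposal is correct and follows essentially the same approach as the paper: choose $T$ so that $(qT)^b$ is a fixed negative power of $x$ times $x$ (the paper takes $(qT)^b=x^{1-\eps}$, you take $(qT)^b=x^{1-b\eps/2}$), verify that $T\le q^C$ so that Corollary~\ref{cor:zerofreeregion} applies, and bound the sum over zeros via the density estimate~\eqref{eq:ZeroDen} to obtain a contribution $\ll x^{-c\eps\vartheta}(\log x)^{O(1)}$. The only cosmetic differences are that the paper cites Gallagher's equations (16)--(18) directly rather than writing out the explicit formula, and handles zeros with $\beta<\tfrac12$ by extending the integration to $\alpha=0$ and invoking the trivial bound $N_q(\alpha,T)\ll qT\ell$, whereas you use the symmetry $\rho\mapsto 1-\bar\rho$ to restrict to $\beta\ge\tfrac12$.
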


In particular, using the value $b=12/5$
we see that Theorem~\ref{thm:prime AP} can be applied throughout
the range $q^A \ge x \ge h \ge q x^{7/12+\eps}$.

Our proof of Theorem~\ref{thm:prime AP} closely follows that
of Gallagher~\cite[Theorem~2]{Gal}, however we apply
Corollary~\ref{cor:zerofreeregion} at an appropriate place.
We remark that the results of
Gallagher~\cite{Gal} and Iwaniec~\cite{Iwan}  imply only a weaker form of  
Theorem~\ref{thm:prime AP} with the   error term 
$O(h\exp(-c_0(\log x)^{1/4}(\log\log x)^{3/4}))$.

\section{Preliminaries}

\subsection{Notation}

For a real number $t>0$, $\fl{t}$ denotes the greatest integer
not exceeding $t$, and $\rf{t}$ denotes
the least integer that is not less than $t$.

Throughout the paper, we use the symbols $O$, $\ll$, $\gg$
and $\asymp$ along with their standard meanings; any constants
or functions implied by these symbols are \emph{absolute} unless
specified otherwise.

\subsection{Polynomial representation of characters}

Following Gallagher~\cite{Gal}, for an integer $d\ge 1$ we use 
$F_d$ to denote the polynomial approximation to $\log(1+x)$ given by
\begin{equation}
\label{eq:Fdxdefn}
F_d(x)= \sum_{r=1}^d  (-1)^{r-1}\frac{x^r}{r}.
\end{equation}
According to~\cite[Lemma~2]{Iwan} (which extends~\cite[Lemma~2]{Gal})
we have the following statement.

\begin{lemma}
\label{lem:iwaniec}
Let $\chi$ be a primitive character modulo $q$.
Let $d$ be an integer such that $q^2\mid \fq^d$,
and put
$$
\tau=\begin{cases}
2&\quad\hbox{if $4\mid q$,}\\
1&\quad\hbox{otherwise}.
\end{cases}
$$
Then $\chi(1+\tau \fq x)=\e(f(x))$, where $f$ is a polynomial of the form
$$
f(x)= q^{-1}m\cdot F_d(\tau \fq x)
$$
with an integer $m$ for which $\gcd(m,q)=1$, and $r\mid m$ for every
integer $r\in[1,d]$ coprime to $q$.
\end{lemma}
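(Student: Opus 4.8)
The proof naturally proceeds prime by prime via the Chinese Remainder Theorem, using the $p$-adic logarithm in its truncated polynomial form $F_d$. Write $q=\prod_{p}p^{\gamma_p}$ with $\gamma_p=v_p(q)$ and factor $\chi=\prod_{p\mid q}\chi_p$, where each $\chi_p$ is a character modulo $p^{\gamma_p}$; primitivity of $\chi$ is equivalent to primitivity of every $\chi_p$. The primes dividing $\tau\fq$ are exactly those dividing $q$, and $v_p(\tau\fq)=\delta_p$ with $\delta_p=1$ for $p$ odd and $\delta_p=2$ for $p=2$ (when $4\mid q$; if $q\equiv2\pmod{4}$ the prime $2$ plays no role). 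Thus $1+\tau\fq x$ lies in the cyclic group $U_p:=1+p^{\delta_p}\ZZ/p^{\gamma_p}\ZZ$ of order $p^{\gamma_p-\delta_p}$, and it suffices to understand each $\chi_p$ on $U_p$.

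For the local representation I would work in $\ZZ_p$ and use that $\log(1+w)=\sum_{r\ge1}(-1)^{r-1}w^r/r$ is a group isomorphism $1+p^{\delta_p}\ZZ_p\to p^{\delta_p}\ZZ_p$. The crucial input is the bound $v_p\bigl((\tau\fq x)^r/r\bigr)\ge r\delta_p-v_p(r)\ge r/2>\gamma_p$ valid for all $r>d$, which is exactly where the hypothesis $q^2\mid\fq^d$ (equivalently $d\ge2\gamma_p$ for every $p\mid q$) enters; it gives $\log(1+\tau\fq x)\equiv F_d(\tau\fq x)\pmod{p^{\gamma_p}}$. Since $F_d(\tau\fq x)=\tau\fq x\bigl(1-\tfrac12\tau\fq x+\cdots\bigr)$ with second factor $\equiv1\pmod p$, one deduces that $1+\tau\fq x\mapsto p^{-\delta_p}F_d(\tau\fq x)\bmod p^{\gamma_p-\delta_p}$ is a well-defined isomorphism $U_p\to\ZZ/p^{\gamma_p-\delta_p}\ZZ$, and hence that $\chi_p(1+\tau\fq x)=\e\bigl(a_p\,p^{-\gamma_p}F_d(\tau\fq x)\bigr)$ for some integer $a_p$; primitivity of $\chi_p$---nontriviality on the subgroup of order $p$ in $U_p$---forces $\gcd(a_p,p)=1$. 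Multiplying over $p\mid q$ and putting the exponents over the common denominator $q$ yields $\chi(1+\tau\fq x)=\e\bigl(q^{-1}mF_d(\tau\fq x)\bigr)$ with $m=\sum_{p\mid q}a_p(q/p^{\gamma_p})$, and the familiar Chinese-remainder bookkeeping (every summand but one is divisible by $p$, the remaining one coprime to $p$) gives $\gcd(m,q)=1$. The degenerate case $\gamma_p=1$, where $U_p$ is trivial, is handled by choosing $a_p$ directly and does not affect the conclusion.

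It remains to show that $r\mid m$ for every $r\in[1,d]$ coprime to $q$. Since $1+\tau\fq x$ has order dividing $\prod_{p\mid q}p^{\gamma_p-\delta_p}$, which divides $q$, we get $\chi(1+\tau\fq x)^q=1$, i.e.\ $mF_d(\tau\fq x)\in\ZZ$ for all $x\in\ZZ$. Fix a prime $p'\le d$ with $p'\nmid q$ and set $J=\lfloor\log_{p'}d\rfloor\ge1$. Collecting the terms of $mF_d(\tau\fq x)$ according to the value of $v_{p'}(r)$ produces the identity $mF_d(\tau\fq x)=\sum_{j=0}^{J}\tfrac{m}{p'^j}\,Q_j\bigl((\tau\fq x)^{p'^j}\bigr)$ with each $Q_j\in\ZZ_{(p')}[w]$ of the shape $Q_j(w)=\pm w+(\text{higher-order terms})$; in particular the reduction of $Q_J$ modulo $p'$ is a nonzero polynomial of degree at most $p'-1$ with nonzero linear coefficient, hence one that cannot vanish identically on $(\ZZ/p'\ZZ)^\times$. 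Choosing $x$ so that $\tau\fq x$ is congruent modulo $p'$ to a unit $u_0$ with $Q_J(u_0)\ne0$ in $\ZZ/p'\ZZ$, the summand $j=J$ has $v_{p'}$ equal to $v_{p'}(m)-J$ while every other summand has strictly larger $v_{p'}$; integrality of $mF_d(\tau\fq x)$ then forces $v_{p'}(m)\ge J\ge v_{p'}(r)$ for every $r\le d$. Since all prime divisors of $r$ are coprime to $q$, running over them gives $r\mid m$.

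I expect this last step to be the main obstacle: the representation itself falls straight out of the logarithm formalism, but the divisibility of $m$ by all such $r$ has to be squeezed out separately from the arithmetic of the values of $\chi$, and one must be careful because several indices $r\le d$ can share the maximal power of $p'$---which is why one varies $x$ and must verify that the resulting polynomial over $\ZZ/p'\ZZ$ is not annihilated by the whole multiplicative group.
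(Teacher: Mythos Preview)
The paper does not prove this lemma; it simply quotes \cite[Lemma~2]{Iwan} (extending \cite[Lemma~2]{Gal}). So there is no argument in the paper itself to compare against, and your proposal would supply what the paper leaves to the reference.

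There is, however, a genuine gap in your passage from the $p$-adic logarithm to the local formula $\chi_p(1+\tau\fq x)=\e\bigl(a_p\,p^{-\gamma_p}F_d(\tau\fq x)\bigr)$ with the \emph{archimedean} character $\e$. The isomorphism you describe sends $1+\tau\fq x$ to the class of $p^{-\delta_p}F_d(\tau\fq x)$ taken in $\ZZ_p$, and this quantity is only a $p$-adic integer: $F_d(\tau\fq x)$ carries denominators at primes $\ell\nmid q$. Writing a character of $\ZZ/p^{\gamma_p-\delta_p}\ZZ$ as $n\mapsto\e(a_pn/p^{\gamma_p-\delta_p})$ requires an ordinary integer $n$; substituting the rational number $p^{-\delta_p}F_d(\tau\fq x)$ for $n$ changes the value of $\e$ unless $a_pF_d(\tau\fq x)\in\ZZ_\ell$ for every $\ell\nmid q$, which is precisely the divisibility condition you set out to prove. (Concretely: with $q=9$, $d=4$ and $a_3=1$ one has $F_4(3)/9=-17/12$, and $\e(-17/12)$ is not a cube root of unity, so it cannot equal $\chi_3(4)$; only choices with $4\mid a_3$ work.) Your final paragraph is therefore circular: it deduces $r\mid m$ from an identity whose validity already presupposes it. The clean repair, which is essentially how Gallagher and Iwaniec proceed, is to clear the coprime denominators at the outset: set $L=\operatorname{lcm}\{r\le d:\gcd(r,q)=1\}$ and use your own valuation estimates to check that $LF_d(\tau\fq x)\in\ZZ$ for every integer $x$. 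Then $1+\tau\fq x\mapsto LF_d(\tau\fq x)\bmod q$ is an honest isomorphism of finite abelian groups from $1+\tau\fq\ZZ/q\ZZ$ onto $\tau\fq\ZZ/q\ZZ$, ordinary character theory gives $\chi(1+\tau\fq x)=\e\bigl(cLF_d(\tau\fq x)/q\bigr)$ with $\gcd(c,q)=1$, and $m=cL$ carries $r\mid L\mid m$ automatically---making your last paragraph unnecessary.
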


\subsection{Bounds of exponential sums}

Suppose $d\ge 2$, and $g(x)=\alpha_1 x+\cdots+\alpha_d x^d$
with each $\alpha_r\in\RR$.  Suppose further that
each $\alpha_r$ has a rational approximation of the form
$$
\alpha_r=\frac{a_r}{b_r}+\frac{\vartheta_r}{b_r^2},\qquad
a_r\in\ZZ,\qquad b_r\in\NN,\qquad
\gcd(a_r,b_r)=1,\qquad
|\vartheta_r|\le 1.
$$
Let $S$ denote the double exponential sum
\begin{equation}
\label{eq:doublesumdefn}
S=\sum_{y,z=1}^P \e(g(yz)).
\end{equation}
The next result is due to Korobov~\cite[Lemma~3]{Kor}; it
provides a bound on $S$ in terms of $N_{k,d}(P)$
(the number of solutions to~\eqref{eq:System}) and a
product involving the denominators of the coefficients of $g$. 

\begin{lemma}
\label{lem:korobov}
For any natural number $k$, the sum~\eqref{eq:doublesumdefn}
admits the upper bound
$$
|S|^{2k^2}\le \(64k^2\log(3Q)\)^{d/2}
WP^{2k(2k-1)} N_{k,d}(P),
$$
where 
$$
Q=\max\{b_r:1\le r\le d\}\mand
W=\prod_{r=1}^d\min\left\{P^r,P^rb_r^{-1/2}+b_r^{1/2}\right\}.
$$
\end{lemma}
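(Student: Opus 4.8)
The plan is to bound $|S|^{2k}$ by a Weyl-type argument that converts the double sum over $y,z$ into a single sum weighted by the number of representations, and then to introduce a second parameter $k$ via Vinogradov's mean value system. First I would fix the inner variable $z$ and write $|S|\le\sum_{z=1}^{P}\bigl|\sum_{y=1}^{P}\e(g(yz))\bigr|$; after raising to the power $2k$ and applying Hölder's inequality in the $z$-variable, the estimate reduces to controlling $\sum_{z=1}^{P}\bigl|\sum_{y=1}^{P}\e(g(yz))\bigr|^{2k}$. Expanding the $2k$-th power and collecting the monomials $y_i^r z^r$, the sum over $z$ becomes a complete-type exponential sum whose phase is a polynomial in $z$ of degree $d$ with leading behaviour governed by the $\alpha_r$; one applies the standard Weyl–van der Corput differencing (or directly a Weyl bound for polynomial exponential sums, using the rational approximations $\alpha_r=a_r/b_r+\vartheta_r/b_r^2$) to obtain a factor of the shape $\prod_{r=1}^{d}\min\{P^r,\,P^r b_r^{-1/2}+b_r^{1/2}\}$, together with the logarithmic loss $(64k^2\log(3Q))^{d/2}$ coming from the number of times the differencing is iterated and from the divisor-type estimates on the number of $z$ with a given value of the phase derivative. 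This is where the quantity $W$ and the $\log(3Q)$ factor enter.

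Next I would handle the $y$-variables. After the step above, what survives is, up to the factors already extracted, a count of the number of integer solutions $y_1,\dots,y_{2k}\in[1,P]$ for which the symmetric power sums $\sum_i y_i^r$ match in a suitable range $1\le r\le d$; squaring once more (i.e. passing from $2k$ to $2k^2$, which explains the exponent $2k^2$ on the left and the power $P^{2k(2k-1)}$ on the right as the ``diagonal'' contribution from the remaining free variables) turns this into exactly the Vinogradov system~\eqref{eq:System}, whose solution count is $N_{k,d}(P)$ by definition. Assembling the pieces — the Hölder step, the Weyl bound producing $W$ and the logarithmic factor, and the combinatorial reduction to $N_{k,d}(P)$ — yields the claimed inequality
$$
|S|^{2k^2}\le\bigl(64k^2\log(3Q)\bigr)^{d/2}\,W\,P^{2k(2k-1)}\,N_{k,d}(P).
$$

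The main obstacle will be bookkeeping the constants and the exponents so that they come out exactly in the stated form: one must be careful that the two successive power-raising steps (to $2k$, then squaring to $2k^2$) distribute the trivial ``diagonal'' savings correctly, giving $P^{2k(2k-1)}$ rather than some nearby power, and that the Weyl/differencing step is organised so the denominators $b_r$ appear only through the single clean product $W$ with the truncation at $P^r$, rather than through messier expressions. A secondary technical point is tracking where the $\log(3Q)$ is needed — it arises from summing a divisor-type bound over the possible values of the linear-in-$z$ part of the differenced phase, and from ensuring the estimate is uniform when some $b_r$ is as large as $Q$. Since Korobov's Lemma~3 in~\cite{Kor} is being invoked essentially verbatim, the cleanest route is to follow that argument directly; the role of the present write-up is mainly to verify that the hypotheses (the form of the rational approximations, $d\ge2$, arbitrary $k\in\NN$) match, and that the output is stated in the normalisation convenient for the later application in Theorem~\ref{thm:mainG}.
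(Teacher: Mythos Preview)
The paper does not prove this lemma: it is quoted verbatim as Korobov~\cite[Lemma~3]{Kor} and used as a black box, so strictly speaking the ``paper's proof'' is a citation. Your closing remark already anticipates this, and following Korobov directly is exactly what the authors do.

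Your outline has the right skeleton (two rounds of H\"older, reduction to the Vinogradov system, emergence of $N_{k,d}(P)$), but the mechanism you describe for the factor $W$ is not what actually happens in Korobov's argument. You say one applies Weyl--van der Corput differencing to the $z$-sum, treated as a polynomial exponential sum of degree $d$. That would only see the top coefficient, not produce a clean product over all $r$. In Korobov's proof, after the first H\"older (in, say, $z$) and expansion one passes to power-sum differences $\lambda_r=\sum y_i^r-\sum y_i'^r$; a second H\"older (now in the $\lambda$-weighted sum) and a second expansion introduces a companion set $\mu_r$ coming from the $z$-variables. The phase is then \emph{bilinear}, $\sum_r \alpha_r\lambda_r\mu_r$, so the inner sum over $(\lambda_r)$ factors as a product over $r$ of linear (geometric) sums of length $\asymp kP^r$, each bounded by $\min\{kP^r,\|\alpha_r\mu_r\|^{-1}\}$. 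Summing these over $|\mu_r|<kP^r$ using the rational approximation $\alpha_r=a_r/b_r+\vartheta_r/b_r^2$ is what yields each factor $\min\{P^r,\,P^r b_r^{-1/2}+b_r^{1/2}\}$ of $W$ together with the logarithmic loss $\log(3Q)$; no differencing is involved. The $N_{k,d}(P)$ enters because $\sum_\lambda J(\lambda)^2=N_{k,d}(P)$ after Cauchy--Schwarz on the representation weights.

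A minor slip: going from exponent $2k$ to $2k^2$ is raising to the $k$-th power, not ``squaring''; keeping this straight is exactly the bookkeeping you flag as delicate for landing on $P^{2k(2k-1)}$.
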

 
We also use the following weakened and simplified
version of a result of Ford~\cite[Theorem~3]{Ford}.

\begin{lemma}
\label{lem:ford}
For every integer $d\ge 129$ there is an integer
$k\in[2d^2,4d^2]$ such that
$$
N_{k,d}(P)\le d^{3d^3}P^{2k-0.499d^2}\qquad(P\ge 1).
$$
\end{lemma}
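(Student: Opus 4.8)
The plan is to prove Lemma~\ref{lem:ford} by specializing and simplifying Ford's Theorem~3 from~\cite{Ford}. Ford's result gives, for suitable ranges of $k$ relative to $d$, a bound of the shape $N_{k,d}(P)\ll_{k,d} P^{2k-\tfrac12 d(d+1)+\Delta_{k,d}}$ where the ``defect'' $\Delta_{k,d}$ decays as $k$ grows past $\tfrac12 d(d+1)$, together with an explicit description of the implied constant as a (super-exponential) function of $d$ alone. First I would recall the precise statement of Ford's bound and identify a clean regime of parameters: taking $k$ of order $d^2$ puts us comfortably past the ``critical'' number of variables $\tfrac12 d(d+1)\sim \tfrac12 d^2$, so the defect $\Delta_{k,d}$ in the exponent becomes small relative to $d^2$ --- concretely, small enough that $-\tfrac12 d(d+1)+\Delta_{k,d}\le -0.499\,d^2$ once $d$ is large (the threshold $d\ge 129$ is exactly the cutoff making this numerical inequality valid together with whatever constraints Ford's theorem imposes on the admissible range of $k$).

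The key steps, in order, would be: (i) quote Ford's theorem with its explicit exponent and its explicit (effective) constant $C(d)$; (ii) choose $k$ to be an integer in $[2d^2,4d^2]$ lying in Ford's admissible window --- since that window has length growing like a positive proportion of $d^2$, it certainly contains an integer in $[2d^2,4d^2]$ for $d\ge 129$, and I would pick one for which Ford's defect term is as small as the theorem guarantees; (iii) bound the resulting exponent of $P$: write it as $2k - \tfrac12 d(d+1) + \Delta_{k,d}$ and check $\tfrac12 d(d+1) - \Delta_{k,d}\ge 0.499\,d^2$ for $d\ge 129$, which is a one-line calculus/arithmetic check once the explicit form of $\Delta_{k,d}$ is in hand; (iv) absorb Ford's constant $C(d)$ into $d^{3d^3}$, using that $\log C(d)\ll d^2\log d$ or at worst $\ll d^3$ in Ford's formulation, so that $C(d)\le d^{3d^3}$ for all $d\ge 129$ (possibly after also using a crude bound like $P^{\,o(d^2)}$ contributions, but since we want a clean $P^{2k-0.499d^2}$ with no extra $P$ powers, all slack must go into the $d$-dependent prefactor). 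This yields exactly $N_{k,d}(P)\le d^{3d^3}P^{2k-0.499 d^2}$ for $P\ge 1$.

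The main obstacle will be step (iii)--(iv): making sure that Ford's exponent, after substituting $k\asymp d^2$, genuinely beats $\tfrac12 d(d+1)$ by at least $0.001\,d^2$ (so that $0.499$ rather than $0.5$ is attainable uniformly), and simultaneously that \emph{all} of the error in the exponent and \emph{all} of Ford's $k,d$-dependent constant can be swallowed by the single factor $d^{3d^3}$. Concretely I expect one must track Ford's constant carefully --- it is typically of the form $e^{c d^2}d^{cd}$ or similar --- and verify $3d^3\log d$ dominates its logarithm for $d\ge 129$; the constant $129$ is presumably the smallest $d$ for which both the admissible-$k$ window is nonempty in $[2d^2,4d^2]$ \emph{and} these numerical dominations hold. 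None of this is deep, but it is the only place where real care (as opposed to quotation) is needed; everything else is bookkeeping around Ford's theorem.
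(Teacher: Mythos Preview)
Your proposal is correct and matches the paper's approach: the paper does not supply an independent proof of this lemma but simply records it as a ``weakened and simplified version'' of Ford~\cite[Theorem~3]{Ford}, and your sketch is precisely the routine verification that Ford's explicit bound specializes to the stated form once one picks $k\in[2d^2,4d^2]$ and checks the numerics for $d\ge 129$. There is nothing to add beyond what you have outlined.
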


\section{Proof of bounds of character sums}

\subsection{Simple character sums: Proof of Theorem~\ref{thm:mainG}}
\label{sec:proof-mainG}

Let $\gamma_0$ and $\eps$ be positive constants such that
\begin{equation}
\label{eq:cond}
\gamma_0\ge e^{200},\qquad
\eps\le 1/200\mand \eps\gamma_0\ge 2.
\end{equation}
Put $d_0=2\gamma$.
Since $\gamma=\max_{p\mid q}\{v_p(q)\}$,
the condition $q^2\mid \fq^{d_0}$ of Lemma~\ref{lem:iwaniec} is clearly met.
Also, the parameter $\varrho$ lies in $[1,\gamma/\gamma_0]$ since
$$
\log q=\sum_{p\mid q}v_p(q)\log p\le\gamma\sum_{p\mid q}\log p=\gamma\log \fq,
$$
whereas by~\eqref{eq:condsC} we have
$$
\log N\ge \gamma_0\log\fq.
$$
Put $s=\fl{\eps\gamma/\varrho}$.
Since $\eps\gamma/\varrho\ge\eps\gamma_0\ge 2$,  it follows that
\begin{equation}
\label{eq:halfepsgamma}
\tfrac12\eps\gamma/\varrho\le \eps\gamma/\varrho-1<s\le \eps\gamma/\varrho,
\end{equation}
and thus $s\asymp\gamma/\varrho$.  Using~\eqref{eq:cond} and~\eqref{eq:halfepsgamma} we deduce that
\begin{equation}
\label{eq:del_s_hyp}
\log\gamma\ge 200
\mand 2\le s\le\gamma/200.
\end{equation}
Finally, we record the simple inequality
\begin{equation}
\label{eq:del_s_hyp2}
t\le e^{t/1250}\qquad(t\ge \gamma_0).
\end{equation}

Let $\cN$ be the set of integers coprime to $q$ in the interval
$[M+1,M+N]$. Then,
\begin{equation}
\label{eq:library}
S_\chi(M,N;G) = \fq^{-2s}V+O(\fq^{3s}),
\end{equation}
where
$$
V=\sum_{y,z=1}^{\fq^s}\sum_{n\in\cN}\chi(n+\fq^s yz) \e(H_n(yz))
$$
and $H_n$ is the polynomial given by
$$
H_n(x) = G(n+\fq^s x)).
$$
For every $n\in\cN$, let $\overline n$ be an integer such that 
$n\overline n\equiv 1\bmod{q}$. Using the multiplicativity 
of $\chi$, we have 
$$
V =\sum_{n\in\cN}\chi(n)\sum_{y,z=1}^{\fq^s}\chi(1+\fq^s\overline nyz) \e(H_n(yz)).
$$
Applying Lemma~\ref{lem:iwaniec} (noting that $s\ge 2$ and
thus $\tau \fq\mid\fq^s$) we see that
\begin{equation}
\label{eq:Vequals}
V=\sum_{n\in\cN}\chi(n)\sum_{y,z=1}^{\fq^s}\e(f_n(yz)+ H_n(yz)),
\end{equation}
where $f_n$ is a polynomial of the form
$$
f_n(x)= q^{-1}m\cdot F_{d_0}(\fq^s\overline nx)
$$
with some integer $m$ such that $\gcd(m,q)=1$ and $r\mid m$ for any
integer $r\in[1,d_0]$ coprime to $q$.
To apply Lemma~\ref{lem:korobov}, we need to 
control the denominators of  the coefficients
of $f_n + H_n$ for each $n\in\cN$. 

Using~\eqref{eq:Fdxdefn} we see that the
$r$-th coefficient of $f_n$ is the rational number
$$
\alpha_r=(-1)^{r-1}\fq^{rs}q^{-1}m\overline n^r r^{-1}.
$$
Write
$$
\alpha_r=\frac{a_r}{b_r},\qquad
a_r\in\ZZ,\qquad b_r\in\NN,\qquad
\gcd(a_r,b_r)=1\qquad(1\le r\le d_0).
$$
Since $r\mid m$ for every
integer $r\in[1,d_0]$ coprime to $q$,
and $\gcd(m\overline n,q)=1$, it follows that $b_r$
is the numerator of the rational number
$$
q\fq^{-rs}\prod_{p\mid\gcd(r,q)}p^{v_p(r)}
$$
when the latter is expressed in reduced form (in particular,
 $b_r$ is composed solely of primes that divide $q$).
Consequently,
$$
v_p(b_r)=\max\{0,v_p(q)-rs+v_p(r)\}
$$
for every prime $p$ dividing $q$.

Let us denote
$$
\sL=\fl{\tfrac32\log d_0}=\fl{\tfrac32\log 2\gamma}.
$$
As the inequality
$v_p(r)\le \sL$ holds for every positive integer $r\le d_0$, we have
\begin{equation}
\label{eq:vpqrx}
\max\{0,v_p(q)-rs\}\le v_p(b_r)\le\max\{0, v_p(q)-rs+\sL\}
\end{equation}
for any prime $p\mid q$.

Now put
\begin{equation}
\label{eq:kappa_s_hyp}
d=\max\limits_{p\mid q}\fl{\frac{v_p(q)+\sL}{s}}
=\fl{\frac{\gamma+\sL}{s}}.
\end{equation}
Note that $d\ge 200$ since $\gamma/s\ge 200$ by~\eqref{eq:del_s_hyp};
in particular, we are able to apply
Lemma~\ref{lem:ford} below with this choice of $d$. 

For any integer $r\ge d$,
it follows from~\eqref{eq:vpqrx} that $b_r=1$; in other words,
$\alpha_r\in\ZZ$. Therefore, defining
$$
g_n(x)=q^{-1}m\cdot F_d(\fq^s\overline nx)\qquad(n\in\cN),
$$
the polynomial $f_n-g_n$ lies in $\ZZ[x]$ for every $n\in\cN$;
therefore, in view of~\eqref{eq:Vequals} we have
\begin{equation}
\label{eq:Vequals2}
V=\sum_{n\in\cN}\chi(n)\sum_{y,z=1}^{\fq^s}\e(h_n(yz)),
\end{equation}
where
$$
h_n(x) = g_n(x)+H_n(x).
$$

Suppose that $\eps$ is initially chosen to be small enough, depending on $C$, 
so that $C\le(3\eps)^{-1}$.  In view of~\eqref{eq:halfepsgamma},
the second inequality in~\eqref{eq:condsC} implies 
\begin{equation}
\label{eq:deg G}
 \deg G\le\gamma/(3s).
\end{equation}
We now use approximations with denominators $\fb_r =1$ for
the initial $\fl{\gamma/(3s)}$ coefficients of $h_n$
(i.e., for $1 \le r \le \gamma/(3s)$)
and with the denominators $\fb_r = b_r$ considered above
for remaining coefficients of $h_n$
 (i.e., for $r > \gamma/(3s)$), which by~\eqref{eq:deg G} 
are the same as the coefficients of  $g_n(x)$. 

Put
$$
Q=\max\{\fb_r:1\le r\le d\}\mand
W=\prod_{r=1}^d\min\left\{\fq^{rs},\fq^{rs}\fb_r^{-1/2}+\fb_r^{1/2}\right\},
$$
Applying Lemma~\ref{lem:korobov} with $P=\fq^s$ we derive the bound
\begin{equation}
\label{eq:kindle1}
\left|\sum_{y,z=1}^{\fq^s}\e(h_n(yz))\right|^{2k^2}
\le \(64k^2\log(3Q)\)^{d/2} W\fq^{2sk(2k-1)} N_{k,d}(\fq^s)
\end{equation}
with any natural number $k$.  Using~\eqref{eq:vpqrx} we have that
\begin{equation}
\label{eq:painting}
q\fq^{-rs}\le b_r\le q\fq^{-rs+\sL}
\qquad(1\le r\le d).
\end{equation}
In particular, $Q\le q\fq^\sL$, which implies (since $q\le\fq^\gamma$)
\begin{equation}
\label{eq:kindle2}
\log(3Q)\le 2\gamma\log \fq.
\end{equation}
Next, note that the hypothesis~\eqref{eq:minmax} immediately yields the bound
$$
\log q=\sum_{p\mid q}v_p(q)\log p\ge0.7\gamma\sum_{p\mid q}\log p=0.7\gamma\log \fq,
$$
hence $q=\fq^{\mu\gamma}$ with some $\mu\in[0.7,1]$.
To estimate $W$, we use~\eqref{eq:painting} to derive the bound
$$
\min\left\{\fq^{rs},\fq^{rs}\fb_r^{-1/2}+\fb_r^{1/2}\right\}\le\begin{cases}
\fq^{rs}&\quad\hbox{if $r\le\gamma/(3s)$};\\
2\fq^{(\mu\gamma-rs+\sL)/2}&\quad\hbox{if $\gamma/(3s)<r\le \mu\gamma/(2s)$};\\
2\fq^{(3rs-\mu\gamma)/2}&\quad\hbox{if $\mu\gamma/(2s)<r\le \gamma/s$};\\
\fq^{rs}&\quad\hbox{if $\gamma/s<r\le d$}.
\end{cases}
$$
To simplify the notation, let $\lambda=\gamma/s$ for the moment. 
Using the preceding bound, we have
$$
W\le
\prod_{r\le \lambda/3}\fq^{rs}
\prod_{\lambda/3<r\le \mu\lambda/2}\(2\fq^{(\mu\gamma-rs+\sL)/2}\)
\prod_{\mu\lambda/2<r\le \lambda}\(2\fq^{(3rs-\mu\gamma)/2}\)
\prod_{\lambda<r\le d}\fq^{rs}\le 2^d \fq^\Delta,
$$
where
$$
\Delta=
\sum_{r\le \lambda/3}rs+
\sum_{\lambda/3<r\le \mu\lambda/2}\frac{\mu\gamma-rs+\sL}{2}+
\sum_{\mu\lambda/2<r\le \lambda}\frac{3rs-\mu\gamma}{2}+
\sum_{\lambda<r\le d}rs.
$$
We write 
\begin{equation}
\label{eq:Delta}
\begin{split}
\Delta&=s\Sigma+\frac{\mu\gamma}{2}\(\frac{\mu\lambda}{2}-\frac{\lambda}{3}+O(1)\)
-\frac{\mu\gamma}{2}\(\lambda-\frac{\mu\lambda}{2}+O(1)\)+O(\sL\lambda)\\
&=s\Sigma+\mu\gamma\(\frac{\mu\lambda}{2}-\frac{2\lambda}{3}\)+O(\gamma+\sL\lambda)
\end{split}
\end{equation}
(recall our convention that all implied constants are absolute), with 
\begin{align*}
\Sigma&=
\sum_{r\le\lambda/3}r-
\frac{1}{2}\sum_{\lambda/3<r\le\mu\lambda/2}r+
\frac{3}{2}\sum_{\mu\lambda/2<r\le\lambda}r+\sum_{\lambda<r\le d}r\\
&\le\frac{1}{2}\(\frac{\lambda}{3}\)^2
-\frac{1}{4}\(\(\frac{\mu\lambda}{2}\)^2-\(\frac{\lambda}{3}\)^2\)
+\frac{3}{4}\(\lambda^2-\(\frac{\mu\lambda}{2}\)^2\)
+\frac{1}{2}\(d^2-\lambda^2\)+O(d).
\end{align*}
Since $d=\lambda+O(1)$ and thus
$d^2-\lambda^2=O(\lambda)$, we derive that
$$
\Sigma=
\(\frac{5}{6}-\frac{\mu^2}{4}\)\lambda^2+O(\lambda).
$$
Inserting this result into~\eqref{eq:Delta}, recalling that
$\lambda=\gamma/s$ and $\mu\in[0.7,1]$, and using~\eqref{eq:kappa_s_hyp}, it follows that
$$
\Delta=\(\frac56+\frac{\mu^2}{4}-\frac{2\mu}{3}\)\frac{\gamma^2}{s}
+O\(\gamma+\frac{\gamma\sL}{s}\)
\le 0.49sd^2+O(sd\log d).
$$
Therefore, if $\eps$ is small enough initially (depending on the absolute implied 
constant in the preceding bound), then we have
$$
\Delta  \le  0.495 sd^2,
$$
and thus
\begin{equation}
\label{eq:kindle3}
W\le 2^d p^{0.495 sd^2}.
\end{equation}
Now, combining the bounds~\eqref{eq:kindle1}, \eqref{eq:kindle2} and~\eqref{eq:kindle3},
and using Lemma~\ref{lem:ford} to bound $N_{k,d}(\fq^s)$, we deduce that
$$
\left|\sum_{y,z=1}^{\fq^s}\e(h_n(yz))\right|^{2k^2}\le  A\fq^B
$$
holds with
$$
A=\(128k^2\gamma\log \fq\)^{d/2} 2^d d^{3d^3}
$$
and
$$
B=4sk^2-0.004sd^2
$$
for some integer $k\in[2d^2,4d^2]$.

Since $k\in[2d^2,4d^2]$ we clearly
have $A\le d^{c d^5}(\gamma\log \fq)^{d/2}$ with
some absolute (effective) constant $c >0$.  As $\gamma\log \fq\ge\gamma_0$,
using~\eqref{eq:del_s_hyp2} and taking into account the definition
\eqref{eq:kappa_s_hyp}, which implies that $\gamma\le 2sd$,
it follows that
$$
(\gamma\log \fq)^{d/2}\le \fq^{0.0004\gamma d}\le \fq^{0.0008sd^2}.
$$
Putting everything together, we find that
$$
\left|\sum_{y,z=1}^{\fq^s}\e(h_n(yz))\right|^{2k^2}\le
d^{cd^3}\fq^{4sk^2-0.0032sd^2}.
$$
Raise both sides to the power $1/(2k^2)$.  Since $k\in[2d^2,4d^2]$
we have
$$
d^{cd^3/(2k^2)} \le d^{c/(8d)} \ll 1 \mand 
sd^2/(2k^2) \ge s/(32d^2); 
$$
consequently,
$$
\sum_{y,z=1}^{\fq^s}\e(h_n(yz))\ll \fq^{2s-0.0001s/d^2}.
$$
Finally, using~\eqref{eq:halfepsgamma} and~\eqref{eq:del_s_hyp}
we see that
$$
\frac{s}{d^2}\asymp\frac{s}{(\gamma/s)^2}
=\frac{s^3}{\gamma^2}\asymp\frac{(\gamma/\varrho)^3}{\gamma^2}
=\frac{\gamma}{\varrho^3}\asymp\frac{\mu\gamma}{\varrho^3},
$$
and therefore
$$
\sum_{y,z=1}^{\fq^s}\e(g_n(yz))\ll \fq^{2s-\xi_0\mu\gamma/\varrho^3}
=\fq^{2s}N^{-\xi_0/\varrho^2}
$$
with some absolute constant $\xi_0>0$.

Inserting the previous bound into~\eqref{eq:Vequals2} we derive that
$$
V\ll \fq^{2s}N^{1-\xi_0/\varrho^2}
$$
and combining this result with~\eqref{eq:library} we obtain that
\begin{equation}
\label{eq:S almost}
S_\chi(M,N;G) \ll N^{1-\xi_0/\varrho^2}+\fq^{3s}.
\end{equation}
The second term on the right side of~\eqref{eq:S almost} is negligible (indeed, 
using~\eqref{eq:halfepsgamma} we have $\fq^s\le N^{\eps/\mu}$, hence
$\fq^{3s}\le N^{5\eps}$, which is insignificant
compared to $N^{1-\xi_0/\varrho^2}$ if one makes suitable initial
choices of the absolute constants $\gamma_0$, $\xi_0$ and $\eps$).
This completes the proof.

\subsection{Dirichlet polynomials: Proof of Theorem~\ref{thm:mainT}}

We continue to use the notation of \S\ref{sec:proof-mainG}.
We denote $\nu=\rf{\gamma/(3s)}$.
For any real number $x$, we have the estimate
$$
(1+x)^{it} = \e(tG(x))\(1+O(|t||x|^{\nu})\), 
$$
where $G(x)=(2\pi)^{-1}F_{\nu-1}(x)$ in the notation of~\eqref{eq:Fdxdefn}
(note that $G(x)$ is a polynomial of degree $\nu-1$ with real coefficients).
Hence, for all $n\in[M+1,M+N]$ and $y,z\in [1,\fq^s]$ we have
$$
\(n+\fq^syz\)^{it}=n^{it}(1+\fq^syz/n)^{it}
=n^{it}\e(tG(\fq^syz/n))+O(N^{-\nu}|t|\fq^{3s\nu}).
$$
Here we have used the fact that $M\asymp N$.
Using this estimate and following the proof of Theorem~\ref{thm:mainG},
in place of~\eqref{eq:library} we derive that 
$$
\TS_\chi(M,N;t)= \fq^{-2s}\tV+O(\fq^{3s} +N^{1-\nu}|t|\fq^{3s\nu}),
$$
where
$$
\tV =\sum_{n\in\cN}\chi(n)n^{it}\sum_{y,z=1}^{\fq^s}\chi(1+\fq^s\overline nyz)
\e(tG(\fq^syz/n)). 
$$
Since $ \deg G<\gamma/(3s)$, at this point the proof
parallels that of Theorem~\ref{thm:mainG}, leading to the bound
\begin{equation}
\label{eq:St almost}
\TS_\chi(M,N;t)\ll N^{1-\xi_0/\varrho^2}+ \fq^{3s} +N^{1-\nu}|t|\fq^{3s\nu}
\end{equation}
in place of~\eqref{eq:S almost}. As before, the term
$\fq^{3s}$ in~\eqref{eq:St almost} does not exceed $N^{5\eps}$
and can thus be disregarded if one makes
suitable initial choices of $\gamma_0$, $\xi_0$ and $\eps$.

To finish the proof, it remains to bound the last term in~\eqref{eq:St almost}.
Let $\tau$ be such that $N^\tau=|t|+3$.
Since $\nu=\rf{\gamma/(3s)}$, it follows that $3s\nu\le\gamma+3s$,
and by~\eqref{eq:halfepsgamma} we have
$\nu\ge\gamma/(3s)\ge\varrho/(3\eps)$; therefore, 
$$
N^{1-\nu}|t|\fq^{3s\nu}
\ll N^{1-\varrho/(3\eps)+\tau}\fq^{\gamma+3s}.
$$
We have $\fq^{3s}\le N^{5\eps}$ as before, and by~\eqref{eq:minmax}
it follows that $\fq^\gamma\le N^{2\varrho}$.
We get that
$$
N^{1-\nu}|t|\fq^{3s\nu}
\ll N^{1-\varrho/(3\eps)+\tau+2\varrho+3\eps}.
$$
Inserting this bound into~\eqref{eq:St almost},
the theorem is a consequence of the inequality
$$
\tau\le\varrho((3\eps)^{-1}-2)-\xi_0/\varrho^2-3\eps,
$$
which follows from the last inequality in~\eqref{eq:condsT}
(which implies, $\tau\le C\varrho + o(1)$) 
assuming that $\eps$ and $\xi_0$
are sufficiently small in terms of $C$.

\section{Proofs of results for $L$-functions and distribution of primes in progressions}

\subsection{Bounds on $L$-functions and zero-free regions: Proof of Theorem~\ref{thm:|L(s,chi)|}}

We begin with a general statement involving
two parameters $\eta$ and $Y$. 

\begin{lemma}
\label{lem:mainL}
For any real number $C>0$ there are effectively computable constants
$\gamma_0,\xi_0,c_0>0$ that depend only
on $C$ and have the following property. Let $q$ be a
modulus satisfying~\eqref{eq:minmax}
and $\chi$ a primitive character modulo $q$.
If $Y$ and $\eta$ satisfy
\begin{equation}
\label{eq:piano}
Y\ge \fq^{\gamma_0}\mand
0<\eta\le\xi_0(\log Y)^2/\ell^2-c_0(\log\ell)/\log Y, 
\end{equation}
where $\ell=\log q(|t|+3)$,
and the inequalities $\sigma>1-\eta$ and $|t|\le q^C$ hold, then
for $s = \sigma + it$ we have
$$
|L(s,\chi)|\le \eta^{-1}Y^\eta.
$$
\end{lemma}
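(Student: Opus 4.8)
The plan is to bound $L(s,\chi)$ by a Dirichlet polynomial, to estimate that polynomial dyadically by Theorem~\ref{thm:mainT}, and then to sum the dyadic contributions with care; the exact shape of the hypothesis on $\eta$ is what makes the last step succeed. We may assume $\tfrac12<\sigma<1$ and $\eta$ smaller than a suitable constant depending on $C$ (otherwise the asserted bound is weaker than the convexity bound). Put $X=\rf{q(|t|+3)}$ and write $L(s,\chi)=\sum_{n\le X}\chi(n)n^{-s}+R$. Since $\chi$ is nonprincipal, $\sum_{n\bmod q}\chi(n)=0$, so the partial sums of $\chi$ depend only on the residue class $\bmod\ q$ and are $\ll\sqrt q\log q$ by P\'olya--Vinogradov; partial summation then gives $R\ll|s|\sqrt q\log q\cdot X^{-\sigma}\ll1$, because $X^{-\sigma}$ more than compensates $|s|\sqrt q\ll(|t|+1)\sqrt q$ once $X\asymp q(|t|+1)$. (If instead $Y\ge X$, truncate at $Y$ and treat the tail the same way.) Finally, the very short initial segment $n\le\fq^{\gamma_0}$ is handled trivially: $\sum_{n\le\fq^{\gamma_0}}n^{-\sigma}\le\sum_{n\le\fq^{\gamma_0}}n^{\eta-1}\ll\eta^{-1}\fq^{\gamma_0\eta}\le\eta^{-1}Y^\eta$, which accounts for both the normalisation $\eta^{-1}$ and the condition $Y\ge\fq^{\gamma_0}$.

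It remains to bound $\sum_{\fq^{\gamma_0}<n\le X}\chi(n)n^{-s}$, which we split into $O(\ell)$ dyadic blocks $(N,2N]$. For $N\le q$, peel off the slowly varying factor $n^{-\sigma}\asymp N^{-\sigma}$ by partial summation and apply Theorem~\ref{thm:mainT} (to $\overline\chi$, parameter $-t$, and length $\le q$) to the subsums $\sum_{N<n\le v}\chi(n)n^{-it}$; using its bound with $\varrho=(\log q)/\log N$ and weakening $\log q$ to $\ell$, the block contributes $\ll N^{1-\sigma}\exp\!\big(-\xi_0(\log N)^3/\ell^2\big)\le\exp\!\big(\eta\log N-\xi_0(\log N)^3/\ell^2\big)$. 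For $q<N\le X$, cut $(N,2N]$ into $\asymp N/q$ complete residue blocks $(jq,(j+1)q]$, each a Dirichlet polynomial of length $q$ to which Theorem~\ref{thm:mainT} applies with $\varrho=1$ (hence saving $q^{-\xi_0}$); reassembling with the weight $(jq)^{-\sigma}$ gives a block contribution $\ll N^{1-\sigma}q^{1-\sigma-\xi_0}\ll N^\eta q^{-\xi_0/2}$, so these blocks contribute $\ll q^{-\xi_0/2}X^\eta\ll1$ in total.

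The heart of the matter is the resulting sum $\sum_{\text{dyadic }N}\exp\!\big(\eta\log N-\xi_0(\log N)^3/\ell^2\big)$. Writing $L=\log N$, the summand peaks at $L^\dagger=\ell\sqrt{\eta/3\xi_0}$, and $L^\dagger\le\log Y$ because $\eta\le\xi_0(\log Y)^2/\ell^2$; over $N\le Y$ the maximal term is $\exp\!\big(\tfrac{2}{3\sqrt3}\,\eta^{3/2}\ell\,\xi_0^{-1/2}\big)\le\exp\!\big(\tfrac{2}{3\sqrt3}\,\eta\log Y\big)\le Y^\eta$ (using $\eta^{1/2}\ell\le\xi_0^{1/2}\log Y$ and the absolute constant $\tfrac{2}{3\sqrt3}<1$), while the rest of that sum is crudely $\ll\int_0^{\log Y}e^{\eta L}\,dL\le\eta^{-1}Y^\eta$. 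If $Y<X$, there remain the overshoot blocks $Y<N\le X$, where the summand already decreases, so this part is $\ll$ (value at $N\approx Y$)$\,\times\,$(width) $\ll Y^\eta\exp\!\big(-\xi_0(\log Y)^3/\ell^2\big)\cdot\ell^2/\big(\xi_0(\log Y)^2\big)$, and the hypothesis, rewritten as $Y^\eta\exp\!\big(-\xi_0(\log Y)^3/\ell^2\big)\le\ell^{-c_0}$, forces this to be $\ll\ell^{\,2-c_0}/\xi_0\ll1$ once $c_0\ge2$. Altogether $|L(s,\chi)|\ll\eta^{-1}Y^\eta$, and shrinking $\xi_0$ and enlarging $\gamma_0,c_0$ absorbs the implied constant.

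The main obstacle is this last summation, together with the bookkeeping preceding it: the savings in Theorem~\ref{thm:mainT} must be propagated through the dyadic range precisely enough that the blocks between $Y$ and the conductor $q(|t|+3)$ are negligible — this is exactly what forces the threshold $\log Y\gg\ell^{2/3}(\log\ell)^{1/3}$ (equivalently, positivity of the right side of the hypothesis on $\eta$) and is the reason for the correction $c_0(\log\ell)/\log Y$. A minor technical point is that Theorem~\ref{thm:mainT} is stated for $2N\ge M\ge N$, whereas the residue blocks $(jq,(j+1)q]$ have $M$ an arbitrary positive multiple of $N=q$; one uses the routine observation that its proof needs only $M\gg N$, the error term $N^{1-\nu}|t|\fq^{3s\nu}$ in that argument improving as $M$ grows.
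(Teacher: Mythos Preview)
Your approach is correct and follows the same overall strategy as the paper: truncate the Dirichlet series at height $\asymp e^{2\ell}$ via P\'olya--Vinogradov, treat the short initial range trivially, and bound the remaining dyadic blocks using Theorem~\ref{thm:mainT}. The differences are in the execution.

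The paper places the trivial cutoff at $Y$ rather than at $\fq^{\gamma_0}$: it simply bounds $\bigl|\sum_{n\le Y}\chi(n)n^{-s}\bigr|\le\sum_{n\le Y}n^{\eta-1}\le 2+\eta^{-1}(Y^\eta-1)$, and then for each dyadic block $(N,2N]$ with $N\ge Y$ observes directly that
\[
N^{\,\eta-\xi_0(\log N)^2/(\log q)^2}
\le Y^{\,\eta-\xi_0(\log Y)^2/\ell^2}
\le \ell^{-c_0},
\]
the first inequality being just monotonicity past the critical point (which lies below $\log Y$, as you noted) and the second a rewriting of the hypothesis~\eqref{eq:piano}. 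Summing over the $O(\ell)$ blocks and taking $c_0$ large enough gives a total contribution $\le 1$; combined with the tail bound $\le 1$ and using $\eta\le\tfrac13$, the exact inequality $|L(s,\chi)|\le\eta^{-1}Y^\eta$ drops out with no implied constant to absorb. Your peak analysis and the separate treatment of $N\le Y$ versus $Y<N$ are therefore unnecessary: the trivial bound on $[1,Y]$ already gives the main term, and the hypothesis on $\eta$ is designed precisely so that every block beyond $Y$ is already $\ll\ell^{-c_0}$.

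On the other hand, you are more careful than the paper in one respect: the paper asserts~\eqref{eq:trump} for all $N\ge\fq^{\gamma_0}$, whereas Theorem~\ref{thm:mainT} literally requires $N\le q$. Your device of cutting $(N,2N]$ for $N>q$ into intervals of length $q$ and applying Theorem~\ref{thm:mainT} with $\varrho=1$ (and your remark that its proof needs only $M\gg N$) is the honest way to fill this in. One minor slip: the block contribution there should be $\ll N^{1-\sigma}q^{-\xi_0}$ rather than $N^{1-\sigma}q^{1-\sigma-\xi_0}$; this only strengthens your conclusion.
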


\begin{proof}
Fix $C>0$, and let $\gamma_0,\xi_0>0$
have the property described in Theorem~\ref{thm:mainT}.  Let $q$ be a
modulus satisfying~\eqref{eq:minmax}
and $\chi$ a primitive character modulo $q$.
By Theorem~\ref{thm:mainT} and partial summation, the bound
\begin{equation}
\label{eq:trump}
\sum_{N<n\le 2N}\chi(n)n^{-s}\ll N^{1-\sigma-\xi_0/\varrho^2}
\qquad(N\ge \fq^{\gamma_0})
\end{equation}
holds, where $\varrho=(\log q)/\log N$ and the implied constant depends only on $C$.

Put $Z=e^{2\ell}$.
Arguing as in the proof of~\cite[Lemma~8]{Iwan}, the bound
\begin{equation}
\label{eq:Zbound}
\left|\sum_{n>Z}\chi(n)n^{-s}\right|\le 1
\end{equation}
holds since $\sigma>\tfrac12$.
On the other hand, let $Y$ and $\eta$ be real
numbers such that satisfy~\eqref{eq:piano}
with some constant $c_0>0$ that depends only $C$.
Assuming that $\sigma>1-\eta$, the bounds~\eqref{eq:piano} and~\eqref{eq:trump} 
imply
$$
\sum_{N<n\le 2N}\chi(n)n^{-s}\ll N^{\eta-\xi_0/\varrho^2}\le Y^{\eta-\xi_0/\varrho^2}
\le\ell^{-c_0}\qquad(N\ge Y).
$$
Hence, if $c_0$ is sufficiently large in terms of $C$, then for $\sigma>1-\eta$
we have
$$
\left|\sum_{N<n\le 2N}\chi(n)n^{-s}\right|\le (3\ell)^{-1}\qquad(N\ge Y),
$$
which by a standard splitting argument yields the bound
$$
\left|\sum_{n\le Z}\chi(n)n^{-s}\right|
\le 1+\left|\sum_{n\le Y}\chi(n)n^{-s}\right|
\le 1+\sum_{n\le Y}n^{\eta-1}
\le 2+\eta^{-1}(Y^\eta-1).
$$
Combining this bound with~\eqref{eq:Zbound}, and assuming that $\eta\le\tfrac13$,
it follows that
$$
|L(s,\chi)|\le\eta^{-1}Y^\eta
$$
provided that $\sigma>1-\eta$.
\end{proof}

We now turn to the proof of Theorem~\ref{thm:|L(s,chi)|}.
Let the notation be as in Lemma~\ref{lem:mainL}.
The first inequality in~\eqref{eq:piano} is
\begin{equation}
\label{eq:ants}
\log Y\ge\gamma_0\log\fq.
\end{equation}
If $Y$ also satisfies the inequality
\begin{equation}
\label{eq:large Y 1}
\log Y \ge (2c_0/\xi_0)^{1/3}  \ell ^{2/3} (\log \ell)^{1/3},
\end{equation}
then it follows that
$$
\xi_0(\log Y)^2/\ell^2-c_0(\log\ell)/\log Y
\ge 0.5\xi_0(\log Y)^2/\ell^2;
$$
hence the second inequality in~\eqref{eq:piano} is
satisfied provided that the lower bound
\begin{equation}
\label{eq:large Y 2}
\log Y\ge 2^{1/2}\xi_0^{-1/2}\eta^{1/2}\ell
\end{equation}
also holds. Consequently, defining $Y$ by the equation  
$$
\log Y=A\max\bigl\{\log\fq,\eta^{1/2}\ell,\ell^{2/3}(\log\ell)^{1/3}\bigr\}
$$
with a suitably large absolute constant $A>0$
(depending only on $\gamma_0,\xi_0,c_0$),
we see that the inequalities
\eqref{eq:ants}, \eqref{eq:large Y 1} and~\eqref{eq:large Y 2} all hold,
hence the condition~\eqref{eq:piano} is met.
Applying Lemma~\ref{lem:mainL} we obtain the stated bound. 

\subsection{The zero-free region: Proof of Theorem~\ref{thm:zerofreeregion}}

We start with a technical
result contained in Iwaniec~\cite{Iwan}, which we present in a generic form
suitable for further applications. 

\begin{lemma}
\label{lem:technicallemma}
Let $q$ be a fixed modulus. Let $\eta\in(0,\frac12)$, $T\ge 1$ and $M\ge e$
be numbers that can depend on $q$. Put
\begin{equation}
\label{eq:varthetadefn}
\vartheta=\frac{\eta}{400\log M},
\end{equation}
and suppose that
\begin{equation}
\label{eq:etacond}
\eta\log(5\log 3q)\le 3\log(2.5\vartheta).
\end{equation}
Suppose that $|L(s,\chi)|\le M$ for all primitive characters $\chi$
modulo~$q$ and all $s$ in the region
$\bigl\{s\in\CC:\sigma>1-\eta,~|t|\le 3T\bigr\}$. 
There is at most one primitive character $\chi$ modulo $q$
such that $L(s,\chi)$ has a zero in the region
$\bigl\{s\in\CC:\sigma>1-\vartheta,~|t|\le T\bigr\}$.
If such a character exists, then it is a real character, and
the zero is unique, real and simple.
\end{lemma}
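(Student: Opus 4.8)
The plan is to run the classical de la Vall\'ee Poussin--Landau zero-free region argument, in the form used by Iwaniec~\cite{Iwan}, with the hypothesis $|L(s,\chi)|\le M$ playing the role normally played by the convexity bound. The starting point is the identity $-\frac{L'}{L}(s,\chi)=\sum_{n\ge1}\Lambda(n)\chi(n)n^{-s}$ for $\sigma>1$, together with the elementary nonnegativity facts $3+4\cos\phi+\cos2\phi=2(1+\cos\phi)^2\ge0$ and, for a real character, $1+\chi(n)\ge0$. The real work, and the step I expect to be the main obstacle, is to convert the sup-norm bound $|L(s,\chi)|\le M$ on the box $\{\sigma>1-\eta,\ |t|\le3T\}$ into a de la Vall\'ee Poussin type inequality
$$
\Re\!\left(-\frac{L'}{L}(s,\chi)\right)\le O(\log M)-\sum_{\rho}\Re\frac{1}{s-\rho}
$$
for $s$ near the line $\sigma=1$ with $|t|\le T$, the sum being over the zeros $\rho$ of $L(\cdot,\chi)$ in a bounded neighbourhood of $s$. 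This is obtained in the usual way from the Borel--Carath\'eodory theorem and Hadamard's three-circles theorem applied on circles centred near $1+it$ of radius comparable to $\eta$ (the factor $3$ in ``$|t|\le3T$'' providing the room needed when one passes from a zero at height $\gamma$ to the point of height $2\gamma$), together with the standard bound on the number of zeros in such a disk, which is where the factor $\log(5\log3q)$ enters; the archimedean factors are absorbed in the same step. The compatibility of the width $\eta$ of the region where $M$ is available with the far smaller width $\vartheta=\eta/(400\log M)$ of the region to be cleared of zeros is precisely what the hypothesis~\eqref{eq:etacond} and the definition~\eqref{eq:varthetadefn} are designed to guarantee.

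With this inequality in hand the rest is textbook. First I would show that any zero $\rho=\beta+i\gamma$ of some $L(s,\chi)$ with $\beta>1-\vartheta$ and $|\gamma|\le T$ forces $\gamma=0$ and $\chi$ real. Applying the $2(1+\cos\phi)^2$ inequality to the triple $\zeta(s)$, $L(s,\chi)$, $L(s,\chi^2)$ at the points $\sigma,\ \sigma+i\gamma,\ \sigma+2i\gamma$, the contribution of $L(s,\chi)$ supplies the term $-4\,\Re\frac{1}{\sigma+i\gamma-\rho}$, the pole of $\zeta$ supplies $\frac{3}{\sigma-1}$, and the third term is $O(\log M)$ outside the exceptional situation in which $\chi^2$ is principal and $\gamma=0$; via the usual case analysis (treating $\gamma$ bounded away from $0$ and $\gamma$ near $0$ separately, and using that complex zeros occur in conjugate pairs), the choice of $\sigma-1$ as a suitable absolute multiple of $\vartheta$ contradicts $\beta>1-\vartheta$ unless we are in that exceptional case, i.e.\ unless $\gamma=0$ and $\chi$ is real. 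The same computation gives simplicity: a zero of multiplicity $\ge2$ would contribute $-\frac{2}{\sigma-\beta}$ in place of $-\frac{1}{\sigma-\beta}$ and again produce a contradiction after optimising~$\sigma$.

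Finally, for the ``at most one character'' assertion I would suppose that two distinct primitive characters $\chi_1,\chi_2$ modulo $q$ (necessarily real, by the previous step) each have a real zero $\beta_1,\beta_2>1-\vartheta$, and use the nonnegativity of $\sum_{n}\Lambda(n)\bigl(1+\chi_1(n)\bigr)\bigl(1+\chi_2(n)\bigr)n^{-\sigma}$. Expanding the product gives $-\frac{\zeta'}{\zeta}(\sigma)-\frac{L'}{L}(\sigma,\chi_1)-\frac{L'}{L}(\sigma,\chi_2)-\frac{L'}{L}(\sigma,\chi_1\chi_2)\ge0$; since $\chi_1\chi_2$ is a \emph{non-principal} character (the two being distinct and real), the last term is $O(\log M)$, so that $\frac{1}{\sigma-\beta_1}+\frac{1}{\sigma-\beta_2}\le\frac{1}{\sigma-1}+O(\log M)$. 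Taking $\sigma=1+c\vartheta$ with $c$ a suitable absolute constant makes the left side exceed $c'/\vartheta$ while the right side is $O(\log M)$, contradicting $\vartheta=\eta/(400\log M)$ once the numerical constants have been fixed. The same device applied with $\chi_1=\chi_2=\chi$ shows that the zero of a single real character is unique. Apart from the first paragraph, all of this is the classical argument; the only point requiring care beyond bookkeeping is extracting the clean $O(\log M)$ bound on $\Re(-L'/L)$ near $\sigma=1$ from the hypothesis $|L|\le M$ alone, with the dependence on $q,T,\eta,\vartheta$ tracked explicitly enough to see that~\eqref{eq:etacond} and~\eqref{eq:varthetadefn} are exactly what close the final inequalities.
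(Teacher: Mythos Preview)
Your proposal is correct and follows essentially the same route as the paper. The paper's proof does not reproduce the argument at all: it simply invokes the first and second parts of the proof of \cite[Lemma~11]{Iwan} and then \cite[Lemma~12]{Iwan}, and checks that the hypotheses \eqref{eq:varthetadefn} and \eqref{eq:etacond} imply the explicit numerical inequalities Iwaniec needs (e.g.\ $6\log(5\log 3q)+\frac{16}{\eta}\log(M/5\vartheta)+\frac{8}{\eta}\log(2M/5\vartheta)\le\frac{1}{15\vartheta}$). What you have written is precisely the content of those Iwaniec lemmas: the Borel--Carath\'eodory step turning $|L|\le M$ into the inequality for $\Re(-L'/L)$, the $3+4\cos\phi+\cos2\phi$ argument for complex characters and for $|\gamma|$ bounded away from $0$, the simplicity/uniqueness argument for a real zero of a real character, and the $(1+\chi_1)(1+\chi_2)$ device for ruling out two exceptional characters. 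One small inaccuracy: the term $\log(5\log 3q)$ does not arise from counting zeros in a disk (that count is $O(\log M)$ here) but from the contribution of $-\zeta'/\zeta$ at the auxiliary point $\sigma$ close to $1$; this does not affect the structure of your argument.
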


\begin{proof}
The first part of the proof of~\cite[Lemma~11]{Iwan}
shows that $L(s,\chi)\ne 0$ throughout the region
$$
\Gamma=\begin{cases}
\{s\in\CC:\sigma>1-\vartheta,~|t|\le T\}&\quad\hbox{if $\chi^2\ne\chi_0$},\\
\{s\in\CC:\sigma>1-\vartheta,~\eta/4<|t|\le T\}&\quad\hbox{if $\chi^2=\chi_0$},\\
\end{cases}
$$
provided that
$$
6\log(5\log 3q)+\frac{16}{\eta}\log(M/5\vartheta)+\frac{8}{\eta}\log(2M/5\vartheta)
\le\frac{1}{15\vartheta},
$$
and this inequality is a consequence of~\eqref{eq:etacond} and the fact that
$$
\frac{24}{\eta}\log M=\frac{3}{50\vartheta}<\frac{1}{15\vartheta}.
$$
The second part of the proof of~\cite[Lemma~11]{Iwan}
then shows that if $L(s,\chi)=0$ for some $s$ in the region
$\{s\in\CC:\sigma>1-\vartheta,~|t|\le\eta/4\}$, then the zero is unique,
real and simple provided that
\begin{equation}
\label{eq:1000}
8\log(5\log 3q)+\frac{16}{\eta}\log(M/5\vartheta)\le\frac{1}{15\vartheta},
\end{equation}
and this inequality is a consequence of~\eqref{eq:etacond} and the fact that
$$
\frac{16}{\eta}\log M=\frac{1}{25\vartheta}<\frac{1}{15\vartheta}.
$$
Finally,~\cite[Lemma~12]{Iwan} shows that there is at most one nonprincipal character
$\chi$ modulo $q$ for which $L(s,\chi)$ has a real zero
$\beta>1-\vartheta$, provided that
$$
2\log(5\log 3q)+\frac{12}{\eta}\log(M/5\vartheta)\le\frac{2}{15\vartheta},
$$
which is consequence of~\eqref{eq:1000}.  The result now follows.
\end{proof}

Turning now to the proof of Theorem~\ref{thm:zerofreeregion}, we note that
with the choice
$$
\eta=\frac{(\log\log q)^{2/3}}{(\log q)^{2/3}}
$$ 
Theorem~\ref{thm:|L(s,chi)|} shows that $|L(s,\chi)|\le M$ for all primitive characters $\chi$ modulo~$q$ and all $s$ in the region
$\bigl\{s\in\CC:\sigma>1-\eta,~|t|\le 3q^{C}\bigr\}$, where
$$
M=(\log q)^B
$$
for some constant $B$ that depends only on $C$ and $\fq$.
Using~\eqref{eq:varthetadefn} to define $\vartheta$, we 
obtain~\eqref{eq:varthetashape} with $A=1/(400B)$.  Taking $B$ larger
(and $A$ smaller) if necessary, we can guarantee that $M\ge e$ and that the
condition~\eqref{eq:etacond} is met.  Applying Lemma~\ref{lem:technicallemma},
we obtain the statement of Theorem~\ref{thm:zerofreeregion}.

\subsection{The zero-free region: Proof of Corollary~\ref{cor:zerofreeregion}}

To prove Corollary~\ref{cor:zerofreeregion} we consider
only those moduli $q$ of the form $q=p^\gamma$, where $p$ is
a fixed odd prime and $\gamma\in\NN$; note that $\fq=p$ for all
such moduli.  Let $\gamma_0$, $A$ and $\vartheta$ be the
numbers supplied by Theorem~\ref{thm:zerofreeregion}
with the constant $C>0$; we can clearly assume that $\gamma_0\ge 2$.

With $p$ fixed, there are only finitely many primitive
characters $\chi$ of conductor $p^\gamma$ with $\gamma<\gamma_0$.
Consequently, after replacing
$A$ with a smaller number (which depends only on $C$ and $p$), 
we can guarantee that $L(s,\chi)$ does not vanish in the
region $\cR=\bigl\{s\in\CC:\sigma>1-\vartheta,~|t|\le q^C\bigr\}$
for any such primitive character.

Given an arbitrary character $\chi$ modulo $q=p^\gamma$, let
$q^*=p^{\gamma^*}$ be its conductor.  Since $\gcd(n,q)=1$
if and only if $\gcd(n,q^*)=1$, we see that $\chi$ is primitive
when viewed as a character modulo $q^*$.

If $\gamma^*<\gamma_0$, $L(s,\chi)$ does not vanish
in $\cR$ by our choice of $A$.
In particular, \emph{this holds true if $\chi$ is a real character}.
Indeed, if $\chi$ is real, then $\chi$ is either the principal
character modulo $p$ or the Legendre symbol modulo $p$, since
$(\ZZ/p^\gamma\ZZ)^\times$ is cyclic for odd $p$ and therefore admits
only two real characters.

If $\gamma^*\ge\gamma_0$ and $\chi$ is not real, then $L(s,\chi)\ne 0$
in $\cR$ by Theorem~\ref{thm:zerofreeregion}.

\subsection{Primes in arithmetic progressions: Proof of Theorem~\ref{thm:prime AP}}

As in the proof of~\cite[Theorem~2]{Gal} we define $T$ by the equation
$(qT)^b = x^{1-\eps}$. Since $x\le q^{1/\eps}$ we have
$$
T=q^{-1}x^{(1-\eps)/b}\le q^C\qquad\text{with}\quad C=\frac{1-\eps}{b\,\eps}-1;
$$
we can assume $\eps$ is small enough so that $C>0$.  Moreover,
since $q\le x^{1/b-\eps}$ we see that
$$
T=q^{-1}x^{1/b-\eps/b}\ge x^{\eps(1-1/b)}\ge 2
$$
if $x$ is large, which we can assume.

Since $\log q\asymp\log x$ holds with implied constants that depend only on $b$ and~$\eps$,
an application of Corollary~\ref{cor:zerofreeregion} shows that
there is a constant $a>0$ depending only on $b$, $\eps$ and $p$
such that $N_q(\alpha,T)=0$ for all $\alpha\ge\vartheta$, where 
\begin{equation}
\label{eq:rho}
\vartheta=\frac{a}{(\log x)^{2/3}(\log\log x)^{1/3}}.
\end{equation}

Using~\eqref{eq:ZeroDen} together with the ``trivial'' bound (see~\cite[Theorem~5.24]{IwKow})
$$
N_q(\alpha,T) \ll qT \ell,
$$
the first double sum in~\cite[Equation~(16)]{Gal} is bounded by
the following precise version of~\cite[Equation~(17)]{Gal}:
\begin{align*}
\int_0^{1-\vartheta} x^{\alpha-1} &N_q(\alpha,T)(\log x)\,d\alpha + x^{-1} N_q(0,T) \\
&\ll (\log x)^{O(1)} \int_0^{1-\vartheta} ((qT)^bx^{-1})^{1-\alpha}\,d\alpha +qTx^{-1}(\log x)^{O(1)}\\
&= (\log x)^{O(1)} \int_0^{1-\vartheta} x^{-\eps(1-\alpha)}\,d\alpha  + x^{(1-\eps)/b-1}  (\log x)^{O(1)}\\
&\ll_\eps x^{-\eps \vartheta} (\log x)^{O(1)}  + x^{(1-\eps)/b-1}  (\log x)^{O(1)},
\end{align*}
where the symbol $\ll_\eps$ indicates that the implied constant may depend on $\eps$.

Since $b>1$ implies that $(1-\eps)/b-1<0$, using \eqref{eq:rho} we see that the first term in the 
preceding bound dominates, and so we obtain that
$$
\int_0^{1-\vartheta} x^{\alpha - 1} N_q(\alpha,T)  \log x d\alpha + x^{-1} N_q(0,T)
\ll_\eps \exp( - c_0 (\log x)^{1/3} (\log\log x)^{-1/3})
$$
holds with any fixed $c_0 < \eps a$.
We also use~\cite[Equation~(18)]{Gal} to
bound the second double sum in~\cite[Equation~(16)]{Gal}. 
Putting everything together, we have
\begin{equation}
\label{eq:gameofthrones}
\begin{split}
&\psi(x+h;q,a)-\psi(x;q,a)-\frac{h}{\varphi(q)}\\
&\qquad\ll_\eps \frac{h}{\varphi(q)}\exp(-c_0(\log x)^{1/3}(\log\log x)^{-1/3})
+\frac{x}{T\varphi(q)}(\log x)^{O(1)}.
\end{split}
\end{equation}
Since
$$
\frac{x}{T}=qx^{1-1/b+\eps/b}\le hx^{\eps/b-\eps}
$$
and $\eps/b-\eps<0$, the first term in the bound of \eqref{eq:gameofthrones}
dominates, and the result follows. 

\section{Comments}

Our results can be extended to more general
classes of moduli. For example, suppose that $q = rs$ with coprime
positive integers $r$ and $s$, and
instead of~\eqref{eq:minmax} we have
$$
\min\limits_{p\mid s}\{v_p(s)\}
\ge 0.7\gamma\qquad
\text{with}\quad\gamma=\max\limits_{p\mid s}\{v_p(s)\}\ge\gamma_0.
$$
For any primitive character $\chi$ modulo $q$, we write
$$
S_\chi(M,N)=\sum_{k=0}^{r-1}
\sum_{(M-k)/r<m\le (M+N-k)/r}\chi(k+rm)+O(r).
$$
Defining $\chi^*(m)=\chi(k+rm)$, we see that
$\chi^*$ is a primitive character modulo $s$,
hence Theorem~\ref{thm:mainG} applies to the inner sum over $m$.
Consequently, if $r$ is not too large (say, $r = N^{o(1)}$),
then we obtain a result of roughly the same 
strength as Theorem~\ref{thm:mainG}. 
This applies to the other results of this paper as well.

\end{document}